\title{The Slice Spectral Sequence for the cyclic group of order $p$}
\author{Vigleik Angeltveit}
\address{Mathematical Sciences Institute \\
Australian National University \\
Canberra, ACT 0200 \\
Australia}
\newtheorem{theorem}{Theorem}[section]
\newtheorem{thm}[theorem]{Theorem}
\newtheorem{lemma}[theorem]{Lemma}
\newtheorem{corollary}[theorem]{Corollary}
\newtheorem{cor}[theorem]{Corollary}
\newtheorem{prop}[theorem]{Proposition}
\theoremstyle{definition}
\newtheorem{example}[theorem]{Example}
\let\c@equation\c@theorem
\numberwithin{equation}{section}
\newcommand{\cA}{\mathcal{A}}              \newcommand{\cO}{\mathcal{O}}
     \newcommand{\bF}{\mathbb{F}}          
\newcommand{\bQ}{\mathbb{Q}} \newcommand{\bR}{\mathbb{R}}        \newcommand{\bZ}{\mathbb{Z}}
\newcommand{\sma}{\wedge} 
\newcommand{\coker}{\textnormal{coker}}
\newcommand{\wh}{\widehat}
\newcommand{\cof}{\textnormal{cof}}
\newcommand{\pic}{\textnormal{Pic}}
\newcommand{\fib}{\textnormal{fib}}
\newcommand{\un}{\underline}
\newcommand{\Gsp}{Sp^G}
\newcommand{\tr}{\textnormal{tr}}
\newcommand{\im}{\textnormal{im}}
\begin{document}

\begin{abstract}
We describe the slice tower and slice spectral sequence for arbitrary suspensions of the Eilenberg-MacLane spectrum of an arbitrary Mackey functor for the cyclic group of prime order.
\end{abstract}

\maketitle

\section{Introduction}
The main goal of this paper is to describe the slice tower and slice spectral sequence for $\Sigma^V H\un{M}$ for an arbitrary virtual $C_p$-representation $V$ and an arbitrary $C_p$-Mackey functor $\un{M}$ when $p$ is prime. See Section \ref{s:slicetower} for a complete description of the slice or coslice tower of $\Sigma^V H\un{M}$, and see Section \ref{s:examples} for a complete description of the slice spectral sequence.

Some of this material is already in the literature: A discussion of $\Sigma^{n} H_{C_2} \un{\bZ}$ goes all the way back to Dugger \cite{Du06}, and there are of course calculations related to the cyclic group $C_8$ in Hill, Hopkins and Ravenel's solution to the Kervaire Invariant problem \cite{HHR}. Yarnall \cite{Ya17} has computed the slice spectral sequence for $\Sigma^n H_{C_{p^k}} \un{\bZ}$ for $p$ odd and $n>0$, and Hill, Hopkins and Ravenel \cite{HHR17} have computed the slice spectral sequence for $\Sigma^{m\lambda} H_{C_{p^k}} \un{\bZ}$ for $p$ odd and $m > 0$. Hill and Yarnall \cite{HiYa18} have an abstract description of the slices of a $C_p$-spectrum. Guillou and Yarnall \cite{GuYa} have a complete description of the slice spectral sequence for $\Sigma^n H_{C_2} \un{\bF_2}$ and $\Sigma^n H_{C_2 \times C_2} \un{\bF_2}$ for $n > 0$, and Carissa Slone \cite{Sl21} has a complete description of the slice spectral sequence for $\Sigma^n H_{C_2 \times C_2} \un{\bZ}$ for $n \in \bZ$. Multiple people, including Slone, have worked out the slice spectral sequence for $\Sigma^V H_{C_2} \un{\bF_2}$ and $\Sigma^V H_{C_2} \un{\bZ}$ in private.

The main contribution of this paper is a systematic treatment of the slice tower and slice spectral sequence of $\Sigma^V H_{C_p} \un{M}$ for any virtual $C_p$-representation $V$, any prime $p$, and any $C_p$-Mackey functor $\un{M}$. While a complete description of the slice or coslice tower is not all that complicated, a complete description of the slice spectral sequence splits into a number of different cases using a total of $18$ additional Mackey functors constructed from $\un{M}$. (Some of these are for $p=2$ only.) For specific $\un{M}$ the slice spectral sequence simplifies considerably, as many of these additional Mackey functors are either $0$ or isomorphic to each other.

\subsection{Conventions}
Throughout this paper $G$ will always denote a finite group, although it will soon become the cyclic group $C_p$. We will work in the category $\Gsp$ of genuine $G$-spectra as described in \cite[Section B.4]{HHR}, but none of our results depend on the particular point-set level foundations. For example, the reader who prefers to work with the infinity-category of genuine $G$-spectra instead may do so.

For a virtual $G$-representation $V=V_1 - V_2$ we will write $S^V$ for a cofibrant replacement of $\Omega^{V_2} \Sigma^{\infty} S^{V_1}$.

By the fiber or cofiber of a map we will always mean the homotopy fiber or homotopy cofiber. Any smash product, mapping space, or function spectrum should be interpreted in the derived sense.

\section{Grading conventions}
It is often convenient to grade equivariant stable homotopy groups on the Picard group of $\Gsp$ rather than on $RO(G)$. For $G=C_p$ for $p$ odd, the main result of \cite{An21A} (alternatively combine \cite[Theorem C]{AMR21} with \cite{Kr20}) says that this Picard group is isomorphic to $(\bZ/p)^{\times}/\{\pm 1\} \times \bZ^2$. The factor of $(\bZ/p)^{\times}/\{\pm 1\}$ comes from $\pic(A(C_p))$, where $A(C_p)$ is the Burnside ring of $C_p$, and $\pm a \in (\bZ/p)^{\times}/\{\pm 1\}$ is represented by the $C_p$-spectrum $S^{\lambda-\lambda(a)}$. Here $\lambda = \lambda(1)$, and $\lambda(a)$ is the $2$-dimensional real $C_p$-representation with the generator of $C_p$ acting by rotation by $\frac{a \cdot 2 \pi}{p}$.

By \cite[Prop 4.1]{HiYa18}, smashing with $S^{\lambda - \lambda(a)}$ preserves slices and hence the slice and coslice towers in Section \ref{s:slicecoslicetower} below.

This means that for the most part we can grade $C_p$-equivariant homotopy groups on just $\bZ\{1,\lambda\}$. Given a virtual representation $V = m \oplus \bigoplus\limits_{i=1}^{(p-1)/2} n_i \lambda(i)$ we can replace $V$ by $V' = m \oplus n \lambda$ with $n=\sum n_i$. If $V = \bigoplus n_i \lambda(i)$ where $\sum n_i=0$, smashing with $S^V$ has the effect of taking the box product of all homotopy Mackey functors with a twisted version $\un{\cA}^a$ of the Burnside Mackey functor. See \cite{An21A} or Example \ref{ex:twistedBurnside} below for details.

Similarly, we can replace the regular representation $\rho_{C_p}$ by $\rho_{C_p}' = 1 \oplus \frac{p-1}{2} \lambda$ at the cost of taking the box product of all homotopy Mackey functors with some $\un{\cA}^a$.

We use the same notation for $p=2$, and denote the sign representation $\sigma$ by $\frac{1}{2} \lambda$.

\section{The slice and coslice tower} \label{s:slicecoslicetower}
Recall from \cite{HHR} or \cite{Ull} that a $G$-spectrum $X$ has an associated slice tower
\[
 X \to \ldots \to P^{n+1} X \to P^n X \to \ldots,
\]
analogous to the Postnikov tower in non-equivariant homotopy theory. Here $P^n X$ is the Bousfield localization of $X$ with respect to a category $\tau_{n+1}$ which we briefly discuss below.

If we let $P_n X$ be the fiber of the map $X \to P^{n-1} X$ we get another tower
\[
 \ldots \to P_{n+1} X \to P_n X \to \ldots \to X
\]
analogous to the Whitehead tower in non-equivariant homotopy theory, which we will call the coslice tower of $X$.

The definition of the slice tower has changed slightly over time. In \cite{HHR}, Hill, Hopkins and Ravenel used the localizing subcategory $\tau_n^{orig}$ generated by $G_+ \sma_H S^{m\rho_H - \epsilon}$ where $\epsilon=0,1$ and $m\dim_\bR(\rho_H)-\epsilon \geq n$. Later Ullman \cite{Ull} realized that by not including the $\epsilon$ the slice tower has slightly better properties, so we let $\tau_n$ be the localizing subcategory generated by $G_+ \sma_H S^{m\rho_H}$ for $m\dim_{\bR}(\rho_H) \geq n$. (In \cite{Ull} $\tau_n^{orig}$ is denoted by $\tau_n$ and our $\tau_n$ is denoted by $\bar{\tau}_n$.) In the examples of interest in the solution to the Kervaire Invariant One problem we end up with the same slice tower and slice spectral sequence with the two versions of localizing subcategory, but we will stick with $\tau_n$ here.

The fiber $P^n_n X = \fib(P^n X \to P^{n-1} X)$, which is also the cofiber $\cof(P_{n+1} X \to P_n X)$, is the $n$-slice of $X$. By forgetting the $G$-action the slice tower becomes the Postnikov tower, and the coslice tower becomes the Whitehead tower.

From the slice tower we get a spectral sequence of $\bZ$-graded Mackey functors
\[
 E_2^{s,t} X = \un{\pi}_{t-s} P^t_t X \implies \un{\pi}_{t-s} X.
\]
With this grading convention the $d_r$ differential has the form
\[
 d_r : E_r^{s,t} \to E_r^{s+r, t+r-1}.
\]

The slice spectral sequence was a key ingredient in the Hill-Hopkins-Ravenel solution to the Kervaire Invariant One problem, and multiple recent papers including \cite{HHR17, GuYa} have studied the slice tower and slice spectral sequence in specific examples.

In addition to describing the slice tower of $X$ we might also wish to contemplate the coslice tower. In fact, we might wish to consider $P_m^n X = \fib(P^n X \to P^{m-1} X)$ for each $-\infty \leq m \leq n \leq \infty$. These all fit in the following diagram, which we will call the slice diagram of $X$:
\[ \xymatrix @C=1pc @R=1.5pc {
\ldots \ar[r] & P_{n+1} X \ar[r] \ar[d] & P_n X \ar[r] \ar[d] & P_{n-1} X \ar[r] \ar[d] & \ldots \ar[r] & X \ar[d] \\
& \vdots \ar[d] & \vdots \ar[d] & \vdots \ar[d] & & \vdots \ar[d] \\
& P^{n+1}_{n+1} X \ar[r] & P^{n+1}_n X \ar[r] \ar[d] & P^{n+1}_{n-1} X \ar[r] \ar[d] & \ldots \ar[r] & P^{n+1} X \ar[d] \\
& & P^n_n X \ar[r] & P^n_{n-1} X \ar[r] \ar[d] & \ldots \ar[r] & P^n X \ar[d] \\
& & & P^{n-1}_{n-1} X \ar[r] & \ldots \ar[r] & P^{n-1} X \ar[d] \\
& & & & & \vdots
} \]

By using the coslice tower instead of the slice tower we get another spectral sequence converging to $\un{\pi}_* X$, but because $P_m^n X$ can be described both as the fiber of $P^n X \to P_{m-1} X$ and as the cofiber of $P_{n+1} X \to P_m X$ the two spectral sequences are isomorphic. One reason for also considering the coslice tower is that sometimes the coslice tower is easier to describe than the slice tower.

We write $X \geq m$ if $X$ is in the localizing subcategory $\tau_m$, and we write $X \leq n$ if $\wh{S} \in \tau_k$ for $k > n$ implies $[\wh{S}, X] = 0$. We write $X \in [m,n]$ if $m \leq X \leq n$, which is equivalent to $X \simeq P_m^n X$.

There is a kind of duality between the the slice tower and coslice tower, given by Brown-Comentz duality. In \cite[Theorem 7.9]{Ull_thesis}, Ullman proves that for any $G$-spectrum $X$ the slice spectral sequence for its Brown-Comenetz dual $I_{\bQ/\bZ}X$ is the $\bQ/\bZ$-dual of the slice spectral sequence for $X$. This implies that by applying $I_{\bQ/\bZ}$ to the slice diagram, the slice tower for $X$ becomes the coslice tower for $I_{\bQ/\bZ} X$ and vice versa.

Brown-Comentz duality turns $\bZ$'s into $\bQ/\bZ$'s, and for that reason we include the constant Mackey functor on $\bQ/\bZ$ as one of our examples. It also turns a constant Mackey functor into a coconstant Mackey functor. If $A$ is torsion then $I_{\bQ/\bZ} \Sigma^V H \un{A} \simeq \Sigma^{-V} H \un{A}^*$, and $I_{\bQ/\bZ} \Sigma^V H\un{\bZ} \simeq \Sigma^{-V} H\un{\bQ/\bZ}^*$.

\section{Some Mackey functors} \label{s:Mackey}
For a general introduction to Mackey functors, see e.g.\ \cite{We00}. We picture a $C_p$-Mackey functor $\un{M}$ as a diagram
\[
 \un{M} = \vcenter{\hbox{\xymatrix{ \un{M}(C_p/C_p) \ar@/_/[d]_{R} \\ \un{M}(C_p/e) \ar@/_/[u]_{\tr} }} }
\]
Here $\un{M}(C_p/e)$ has a $C_p=\{1,t,\ldots,t^{p-1}\}$-action, which we omit from the diagram. For notational convenience we define $N = \sum\limits_{i=0}^{p-1} t^i$. We note a few properties:
\begin{enumerate}
\item For $x \in \un{M}(C_p/C_p)$, $R(x)$ lies in the $C_p$-fixed points of $\un{M}(C_p/e)$. Hence $R$ factors through the fixed points
\[
 \un{M}(C_p/e)^{C_p} = \ker \big( 1-t : \un{M}(C_p/e) \to \un{M}(C_p/e) \big).
\]

\item For $x \in \un{M}(C_p/e)$, $\tr(x) = \tr(t x)$. Hence $\tr$ factors through the orbits
\[
 \un{M}(C_p/e)_{C_p} = \coker \big( 1-t : \un{M}(C_p/e) \to \un{M}(C_p/e) \big).
\]

\item For $x \in \un{M}(C_p/e)$, $R(\tr(x)) = N x$.
\end{enumerate}

\begin{example}
Let $A$ be an abelian group. Then
\[
 \wh{A} = \vcenter{\hbox{\xymatrix{ A \ar@/_/[d] \\ 0 \ar@/_/[u] }} }
\]
is a Mackey functor.
\end{example}

\begin{example} \label{ex:fixedandorbit}
Let $A$ be a $\bZ[C_p]$-module. Then we have a ``fixed point'' Mackey functor
\[
 F(A) = \vcenter{\hbox{\xymatrix{ A^{C_p} \ar@/_/[d]_{incl} \\ A \ar@/_/[u]_{N} }} }
\]
The restriction map $R$ is given by inclusion of fixed points, and $\tr(x) = N x$.

We also have an ``orbit'' Mackey functor
\[
 \cO(A) = \vcenter{\hbox{\xymatrix{ A_{C_p} \ar@/_/[d]_{N} \\ A \ar@/_/[u]_{\pi} }} }
\]
In this case the transfer map $\tr$ is the quotient map $x \mapsto [x]$, and $R([x]) = N x$ (which is independent of choice of representative $x$ of $[x]$).

We have a map $N_A : \cO(A) \to F(A)$ given by
\[ \xymatrix{
 A_{C_p} \ar@/_/[d] \ar[r]^-{N} & A^{C_p} \ar@/_/[d] \\
 A \ar[r]^-1 \ar@/_/[u] & A \ar@/_/[u]
} \]
Here $N$ is once again the map $[x] \mapsto Nx$. We can then consider the Mackey functors
\[
 \wh{\ker(N_A)} = \vcenter{\hbox{\xymatrix{ \ker \big( N : A_{C_p} \to A^{C_p} \big) \ar@/_/[d] \\ 0 \ar@/_/[u] }} }
 \qquad
 \wh{\coker(N_A)} =\vcenter{\hbox{\xymatrix{ \coker \big( N : A_{C_p} \to A^{C_p} \big) \ar@/_/[d] \\ 0 \ar@/_/[u] }} }
\]
as well as
\[
 \un{\im(N_A)} = \vcenter{\hbox{\xymatrix{ \im \big( N : A_{C_p} \to A^{C_p} \big) \ar@/_/[d]_{incl} \\ A \ar@/_/[u]_{N} }} }
\]
Here $\tr$ is given by $N$ and $R$ is given by inclusion.

For an arbitrary $C_p$-Mackey functor $\un{M}$, we write $F(\un{M})$ for $F(\un{M}(C_p/e))$, and similarly for $\cO(\un{M})$, $\wh{\ker(N_{\un{M}})}$, $\wh{\coker(N_{\un{M}})}$ and $\un{\im(N_{M})}$. Hence all of these Mackey functors depend only on $\un{M}(C_p/e)$, and not on $\un{M}(C_p/C_p)$.
\end{example}

\begin{example}
If the $C_p$-action on $A$ is trivial then $F(A) = \un{A}$ is the constant Mackey functor (with $R=1$ and $\tr = p$) and $\cO(A) = \un{A}^*$ is the coconstant Mackey functor (with $R=p$ and $\tr=1$). The map $N$ is multiplication by $p$, so in this case $\wh{\ker(N_A)} = \wh{A[p]}$, where $A[p] = \{ x \in A \quad | \quad px = 0\}$. Similarly, $\wh{\coker(N_A)} = \wh{A/p}$. Finally,
\[
 \un{\im(N_A)} = \vcenter{\hbox{\xymatrix{ pA \ar@/_/[d]_{1} \\ A \ar@/_/[u]_{p} }} }
\]
In this case $R$ is the inclusion map and $\tr$ is multiplication by $p$.

To be even more specific, we consider the three abelian groups $\bZ/p^n$, $\bZ$, and $\bQ/\bZ$. For $\bZ/p^n$ we get the following Mackey functors:
\[
\wh{\ker(N_{\bZ/p^n})} = \wh{\bZ/p} \qquad \wh{\coker(N_{\bZ/p^n})} = \wh{\bZ/p} \qquad \un{\im(N_{\bZ/p^n})} = \vcenter{\hbox{\xymatrix{ \bZ/p^{n-1} \ar@/_/[d]_p \\ \bZ/p^n \ar@/_/[u]_1 }} }
\]
For $\bZ$ we get the following Mackey functors:
\[
 \wh{\ker(N_{\bZ})} = 0 \qquad \wh{\coker(N_{\bZ})} = \wh{\bZ/p} \qquad \un{\im(N_{\bZ})} = \un{\bZ}^*.
\]
Finally, for $\bQ/\bZ$ we get the following Mackey functors:
\[
 \wh{\ker(N_{\bQ/\bZ})} = \wh{\bZ/p} \qquad \wh{\coker(N_{\bQ/\bZ})} = 0 \qquad \un{\im(N_{\bQ/\bZ})} = \un{\bQ/\bZ}.
\]

\end{example}

\begin{example} \label{ex:SESA}
For an arbitrary $C_p$-Mackey functor $\un{M}$, $R$ need not be injective. Let
\[
 \wh{\ker(R)} = \vcenter{\hbox{\xymatrix{ \ker \big( R : \un{M}(C_p/C_p) \to \un{M}(C_p/e) \big) \ar@/_/[d] \\ 0 \ar@/_/[u] }} }
\]
and let
\[
 \un{\im(R)} = \vcenter{\hbox{\xymatrix{ \im(R) \ar@/_/[d]_{\textnormal{incl}} \\ \un{M}(C_p/e) \ar@/_/[u]_N }} }
\]

Then we have a short exact sequence of Mackey functors
\[
 0 \to \wh{\ker(R)} \to \un{M} \to \un{\im(R)} \to 0,
\]
and $R$ is injective for the Mackey functor $\un{\im(R)}$.

We can compare $\un{\im(R)}$ to $F(\un{M})$ by another short exact sequence of Mackey functors:
\[
 0 \to \un{\im(R)} \to F(\un{M}) \to \wh{\coker(R)} \to 0
\]
Here $\coker(R)$ denotes the cokernel of $R : \un{M}(C_p/C_p) \to \un{M}(C_p/e)^{C_p}$. (Not of $R : \un{M}(C_p/C_p) \to \un{M}(C_p/e)$.)
\end{example}

\begin{example} \label{ex:SESB}
For an arbitrary $C_p$-Mackey functor $\un{M}$, $\tr$ need not be surjectve. Let
\[
 \wh{\coker(\tr)} = \vcenter{\hbox{\xymatrix{ \coker \big( \tr : \un{M}(C_p/e) \to \un{M}(C_p/C_p) \big) \ar@/_/[d] \\ 0 \ar@/_/[u] }} }
\]
and let
\[
 \un{\im(\tr)} = \vcenter{\hbox{\xymatrix{ \im(\tr) \ar@/_/[d]_{R} \\ \un{M}(C_p/e) \ar@/_/[u]_{\tr} }} }
\]

Then we have a short exact sequence of Mackey functors
\[
 0 \to \un{\im(\tr)} \to \un{M} \to \wh{\coker(\tr)} \to 0,
\]
and $\tr$ is surjective for the Mackey functor $\un{\im(\tr)}$.

We can compare $\un{\im(\tr)}$ to $\cO(\un{M})$ by another short exact sequence of Mackey functors:
\[
 0 \to \wh{\ker(\tr)} \to \cO(\un{M}) \to \un{\im(\tr)} \to 0.
\]
Here $\ker(\tr)$ denotes the kernel of $\tr : \un{M}(C_p/e)_{C_p} \to \un{M}(C_p/C_p)$. (Not of $\tr : \un{M}(C_p/e) \to \un{M}(C_p/C_p)$.)
\end{example}

\begin{example} \label{ex:p=2MFs}
At $p=2$ there are a few more Mackey functors to consider. For a $\bZ[C_2]$-module $A$, let $A_- = A \otimes \bZ_-$ denote $A$ with $t$ acting by the negative of the original action.

Given a $C_2$-Mackey functor $\un{M}$, we define a new Mackey functor $\underline{\ker(\tr)}_-$ as follows:
\[
 \underline{\ker(\tr)}_- = \vcenter{\hbox{\xymatrix{ \ker \big( \tr : \un{M}(C_2/e) \to \un{M}(C_2/C_2) \big) \ar@/_/[d]_{R} \\ \un{M}(C_2/e)_- \ar@/_/[u]_{\tr} }} }
\]
Here the restriction map is given by inclusion and the transfer map is given by $x \mapsto x - tx$. We note that if $x \in \ker(\tr)$ then $x + tx = 0$, so $tx = -x$.

Similarly, we define the Mackey functor $\underline{\ker(N_M)}_-$ as
\[
 \underline{\ker(N_M)}_- = \vcenter{\hbox{\xymatrix{ \ker \big( N : \un{M}(C_2/e) \to \un{M}(C_2/e) \big) \ar@/_/[d]_{R} \\ \un{M}(C_2/e)_- \ar@/_/[u]_{\tr} }} }
\]

We also define a Mackey functor $\un{\coker(R)}_-$ as
\[
 \underline{\coker(R)}_- = \vcenter{\hbox{\xymatrix{ \coker \big( R : \un{M}(C_2/C_2) \to \un{M}(C_2/e) \big) \ar@/_/[d]_{R} \\ \un{M}(C_2/e)_- \ar@/_/[u]_{\tr} }} }
\]
and a Mackey functor $\un{\coker(N_M)}_-$ as
\[
 \underline{\coker(N_M)}_- = \vcenter{\hbox{\xymatrix{ \coker \big( N : \un{M}(C_2/e) \to \un{M}(C_2/e) \big) \ar@/_/[d]_{R} \\ \un{M}(C_2/e)_- \ar@/_/[u]_{\tr} }} }
\]
\end{example}

\begin{example} \label{ex:twistedBurnside}
The Burnside Mackey functor of $C_p$ is
\[
 \un{\cA} = \vcenter{\hbox{\xymatrix{ \bZ \oplus \bZ \ar@/_/[d]_{\left(\begin{smallmatrix} 1 & p \end{smallmatrix}\right)} \\ \bZ \ar@/_/[u]_{\left( \begin{smallmatrix} 0 \\ 1 \end{smallmatrix} \right)} }} }
\]
As explained in \cite{An21A}, the Picard group of the Burnside ring of $C_p$ can be identified with the Picard group of the category of $C_p$-Mackey functors. It comes out to be $(\bZ/p)^\times/\{\pm 1\}$, and $a$ is represented by the Mackey functor
\[
 \un{\cA}^a = \vcenter{\hbox{\xymatrix{ \bZ \oplus \bZ \ar@/_/[d]_{\left(\begin{smallmatrix} a & p \end{smallmatrix}\right)} \\ \bZ \ar@/_/[u]_{\left( \begin{smallmatrix} 0 \\ 1 \end{smallmatrix} \right)} }} }
\]
All the Mackey functors in this paper that are constructed from $\un{M}$, except for $\un{M}$ itself, are invariant under $\un{\cA}^a \Box -$. For example, if we consider the first short exact sequence in Example \ref{ex:SESA} applied to $\un{M} = \un{\cA}$ and apply $\un{\cA}^a \Box -$ we get the short exact sequence
\[
 0 \to \wh{\bZ} \to \un{\cA}^a \to \un{\bZ} \to 0.
\]
Here $\wh{\bZ}$ and $\un{\bZ}$ do not depend on $a$, in the sense that $\un{\cA}^a \Box \wh{\bZ} \cong \wh{\bZ}$ and $\un{\cA}^a \Box \un{\bZ} \cong \un{\bZ}$.

Smashing with $S^{\lambda - \lambda(a)}$ has the effect of twisting all homotopy Mackey functors by $\un{\cA}^a \Box -$, so if $V = m \oplus \bigoplus n_i \lambda(i)$ and $V' = m \oplus n$ for $n = \sum n_i$ then smashing with $S^{V'-V}$ has this effect for some $a$. This $a$ can be explicitly computed as $a = \prod i^{n_i}$ in $(\bZ/p^\times)/\{\pm 1\}$.
\end{example}

\section{The homology of a point}
In this section we compute the homology groups, or rather the homology Mackey functors, of a point.

The calculations for $C_p$ for $p=2$ and for $p$ odd are almost identical, except for the additional complication that when $p=2$ we have to allow for $V=m+n\lambda$ with $n \in \frac{1}{2} \bZ$.

\begin{prop} \label{p:homologyofpoint}
Let $p$ be any prime, let $V = m \oplus \bigoplus n_i \lambda_i$, and let $n = \sum n_i$. Then $\un{H}_{m+k}(S^V; \un{M})$ is as follows:
\begin{enumerate}
 \item If $n=0$ then $\un{H}_{m+k}(S^V; \un{M})$ is nonzero for $k=0$ only and
 \[
  \un{H}_m(S^V; \un{M}) = \un{\cA}^a \Box \un{M} \qquad a = \big( \prod i^{n_i} \big)^{-1}
 \]
 Here $\un{\cA}^a$ is defined in Example \ref{ex:twistedBurnside}, and $a$ is computed in $(\bZ/p)^\times/\{\pm 1\}$.

 \item If $n > 0$ then $\un{H}_{m+k}(S^V; \un{M})$ is nonzero for $k \in [0,2n]$ only:
 \begin{itemize}
  \item $k=0$: $\wh{\coker(\tr)}$
  \item $k=1<2n$: $\wh{\ker(\tr)}$
  \item $k=1=2n$: $\un{\ker(\tr)}_-$ ($p=2$ only)
  \item $k=2i+1<2n$ for $i \geq 1$: $\wh{\ker(N_{\un{M}})}$
  \item $k=2i+1=2n$ for $i \geq 1$: $\un{\ker(N_M)}_-$ ($p=2$ only)
  \item $k=2i<2n$ for $i \geq 1$: $\wh{\coker(N_{\un{M}})}$
  \item $k=2i=2n$ for $i \geq 1$: $F(\un{M})$
 \end{itemize}

 \item If $n < 0$ then $\un{H}_{m+k}(S^V; \un{M})$ is nozero for $k \in [2n,0]$ only:
 \begin{itemize}
  \item $k=0$: $\wh{\ker(R)}$
  \item $k=-1 > 2n$: $\wh{\coker(R)}$
  \item $k=-1=2n$: $\un{\coker(R)}_-$ ($p=2$ only)
  \item $k=2i-1>2n$ for $i \leq -1$: $\wh{\coker(N_{\un{M}})}$
  \item $k=2i-1=2n$ for $i \leq -1$: $\un{\coker(N_M)}_-$ ($p=2$ only)
  \item $k=2i>2n$ for $i \leq -1$: $\wh{\ker(N_{\un{M}})}$
  \item $k=2i=2n$: for $i \leq -1$: $\cO(\un{M})$
 \end{itemize}

\end{enumerate}

\end{prop}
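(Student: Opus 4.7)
The plan is to reduce to the case $V = n\lambda$ (with $n \in \tfrac{1}{2}\bZ$, where half-integers occur only at $p=2$), and then compute the equivariant homology by cofiber sequences. For the reduction, I would first use the ordinary suspension isomorphism to strip off the $m$ trivial copies. By Example \ref{ex:twistedBurnside}, smashing with $S^{V'-V}$ for $V' = m \oplus n\lambda$ twists homotopy Mackey functors by $\un{\cA}^{a'} \Box -$ with $a' = \prod i^{n_i}$, so $\un{H}_*(S^V; \un{M}) = \un{\cA}^{(a')^{-1}} \Box \un{H}_*(S^{V'}; \un{M})$. In case (1) where $n=0$ and $V'=m$, the right-hand side is $\un{\cA}^{(a')^{-1}} \Box \un{M}$, matching the claim with $a = (\prod i^{n_i})^{-1}$. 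For cases (2) and (3), the closing remark of Example \ref{ex:twistedBurnside} says the named Mackey functors are $\un{\cA}^a$-invariant, so I may just work with $V = n\lambda$.

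For $n \geq 1$, I would use the cofiber sequence $S(n\lambda)_+ \to S^0 \to S^{n\lambda}$. The unit sphere $S(n\lambda) = S^{2n-1}$ carries a free $C_p$-action with a standard equivariant CW structure having one free $C_p$-cell in each dimension $0, 1, \ldots, 2n-1$. The associated cellular chain complex of Mackey functors has differentials alternating between $1-t$ and $N$; its homology at level $C_p/e$ is $M(C_p/e)$ in dimensions $0$ and $2n-1$, while at level $C_p/C_p$ it alternates among $M(C_p/e)_{C_p}, \ker N, \coker N, \ldots, M(C_p/e)^{C_p}$, with transfer $N$ and restriction the inclusion. Feeding this into the long exact sequence of Mackey-functor-valued homology for $S(n\lambda)_+ \to S^0 \to S^{n\lambda}$: for $k \geq 2$ I read off isomorphisms $\un{H}_k(S^{n\lambda}; \un{M}) \cong \un{H}_{k-1}(S(n\lambda)_+; \un{M})$, which produce the middle Mackey functors $\wh{\ker(N_{\un{M}})}$ and $\wh{\coker(N_{\un{M}})}$ and the top one $F(\un{M})$; for $k = 0, 1$ the boundary $M(C_p/e)_{C_p} \to M(C_p/C_p)$ is identified with the transfer $\tr$ of $\un{M}$, producing $\wh{\coker(\tr)}$ in degree $0$ and $\wh{\ker(\tr)}$ in degree $1$.

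For $n<0$, I would apply Brown-Comenetz duality $I_{\bQ/\bZ}$ (end of Section \ref{s:slicecoslicetower}): the duality dualizes the homology computation and swaps $F \leftrightarrow \cO$, $\wh{\coker(\tr)} \leftrightarrow \wh{\ker(R)}$, $\wh{\ker(\tr)} \leftrightarrow \wh{\coker(R)}$, and $\wh{\ker(N_{\un{M}})} \leftrightarrow \wh{\coker(N_{\un{M}})}$, so applying it to the positive-$n$ answer recovers case (3). For $p=2$ with $2n$ odd (i.e.\ $V$ contains a single $\sigma$), I rerun the cofiber-sequence argument with $S(\sigma)_+ = (C_2)_+$ replacing $S(\lambda)_+$: the top cell is now a free $C_2$-cell in odd dimension, so the $C_2$-action on the top fundamental class picks up a sign, and this propagates through the chain complex to produce the sign-twisted Mackey functors $\un{\ker(\tr)}_-$, $\un{\ker(N_M)}_-$, $\un{\coker(R)}_-$, and $\un{\coker(N_M)}_-$.

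The main obstacle is the Mackey functor bookkeeping: the chain-complex computation at level $C_p/e$ is routine, but carefully identifying each connecting homomorphism with $\tr$, $R$, or $N$ on $\un{M}$, and verifying that the short exact sequences at both levels assemble into the \emph{specific} Mackey functors named in the statement rather than merely isomorphic ones, requires a case-by-case check of restriction and transfer. The $p=2$ half-integer cases are the most delicate: the sign-twist on the top cell prevents the answer from being a hat Mackey functor, and care is needed to track which maps acquire the sign.
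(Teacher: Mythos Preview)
Your route for $n>0$ is essentially the paper's, differently packaged. The paper puts a $G$-CW structure on $S^{V'}$ with one fixed $C_p/C_p$-cell in dimension $m$ and one free $C_p/e$-cell in each dimension $m+1,\ldots,m+2n$, then takes homology of the resulting two-level chain complex of Mackey functors directly; your cofiber sequence $S(n\lambda)_+\to S^0\to S^{n\lambda}$ is the first stage of that cellular filtration, and identifying the connecting map with $\tr$ is exactly reading off the leftmost differential in the paper's complex.

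The real difference is for $n<0$. The paper simply repeats the cellular computation using a $G$-CW structure on $S^{V'}$ with one fixed cell in dimension $m$ and free cells in dimensions $m+2n,\ldots,m-1$; the chain complex is the formal reverse of the $n>0$ one, which immediately produces the table with $R\leftrightarrow\tr$ and $\ker\leftrightarrow\coker$ swapped. Your Brown--Comenetz shortcut has a loose end: $I_{\bQ/\bZ}(\Sigma^V H\un{M})\simeq\Sigma^{-V}H(\un{M}^\vee)$ with $\un{M}^\vee$ the levelwise $\bQ/\bZ$-dual, so applying case~(2) computes the homology with coefficients in $\un{M}^\vee$, not $\un{M}$; the swaps you list are really identities of the form $F(\un{M})^\vee\cong\cO(\un{M}^\vee)$, and undoing the outer $(-)^\vee$ to recover the statement for $\un{M}$ itself is not automatic for arbitrary $\un{M}$. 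The clean fix is to replace BC duality by Spanier--Whitehead duality: $S^{n\lambda}=D(S^{-n\lambda})$ and the suspension spectra $\Sigma^\infty_+(C_p/H)$ are self-dual, so the cellular chain complex for $n<0$ is literally the reversed complex from the $n>0$ case, still with coefficients in $\un{M}$. That is in effect what the paper's direct computation does.
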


Except in the definition of $\un{\ker(\tr)}_-$, $\un{\ker(N_M)}_-$, $\un{\coker(R)}_-$ and $\un{\coker(N_M)}_-$ we view $R$ as a map $\un{M}(C_p/C_p) \to \un{M}(C_p/e)^{C_p}$, $\tr$ as a map $\un{M}(C_p/e)_{C_p} \to \un{M}(C_p/C_p)$, and $N_{\un{M}}$ as a map $\un{M}(C_p/e)_{C_p} \to \un{M}(C_p/e)^{C_p}$.

\begin{proof}
Part (1) follows from the discussion in Example \ref{ex:twistedBurnside} above.

For Part (2) we first replace $V$ with $V' = m \oplus n \lambda$. Then we compute the homology Mackey functors from a $G$-CW structure on $S^{V'}$. We can put a $G$-CW structure on $S^{V'}$ with a $C_p/C_p$-cell in dimension $m$ and a $C_p/e$-cell in dimension $m+k$ for $1 \leq k \leq 2n$. The homology Mackey functors are given by the homology of the following chain complex (drawn here for $n=2$):
\[ \xymatrix @C=1.4pc {
 \un{M}(C_p/C_p) \ar@/_/[d]_R & \un{M}(C_p/e) \ar[l]_-{\tr} \ar@/_/[d]_\Delta & \un{M}(C_p/e) \ar[l]_-{1-t} \ar@/_/[d]_\Delta & \un{M}(C_p/e) \ar[l]_-{N} \ar@/_/[d]_\Delta & \un{M}(C_p/e) \ar[l]_-{1-t} \ar@/_/[d]_\Delta \\ \un{M}(C_p/e) \ar@/_/[u]_{\tr} & \oplus_p \un{M}(C_p/e) \ar[l]_-{\nabla'} \ar@/_/[u]_\nabla & \oplus_p \un{M}(C_p/e) \ar[l]_-{1-sh} \ar@/_/[u]_\nabla & \oplus_p \un{M}(C_p/e) \ar[l]_-{N'} \ar@/_/[u]_\nabla & \oplus_p \un{M}(C_p/e) \ar[l]_-{1-sh} \ar@/_/[u]_\nabla
} \]
Here $N=\sum\limits_{i=0}^{p-1} t^i$ as before. The $C_p$-action on $\oplus_p \un{M}
(C_p/e)$ is given by cyclically permuting the summands. In this case we denote the $t$-action by $sh$, and define $N' = \sum\limits_{i=0}^{p-1} sh^i$. The map $\nabla'$ is given by $(x_0,\ldots,x_{p-1}) \mapsto \sum\limits_{i=0}^{p-1} t^i x_i$, while $\Delta$ and $\nabla$ are the usual diagonal and fold map, respectively. We note that to get these formulas we have to make a choice of identification $C_p/e \times C_p/e \cong \coprod\limits_p C_p/e$, and making a different choice leads to slightly different formulas. Taking homology gives Part (2).

Part (3) is similar, starting with a $G$-CW structure on $S^{V'}$ with a $C_p/C_p$-cell in dimension $m$ and a $C_p/e$-cell in dimension $m+k$ for $2n \leq k \leq -1$.
\end{proof}

\begin{cor} \label{c:OversusF}
For any $C_p$-Mackey functor $\un{M}$, $\Sigma^{\lambda} H \cO(\un{M}) \simeq \Sigma^2 F(\un{M})$
\end{cor}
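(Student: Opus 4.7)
The plan is to compute $\un{\pi}_\ast \Sigma^\lambda H\cO(\un{M}) = \un{H}_\ast(S^\lambda;\cO(\un{M}))$ directly from Proposition~\ref{p:homologyofpoint} with $V = \lambda$ (so $m = 0$ and $n = 1$) and coefficient Mackey functor $\cO(\un{M})$, and then recognize the result as the homotopy Mackey functors of $\Sigma^2 HF(\un{M})$.

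The decisive feature of the orbit Mackey functor, from Example~\ref{ex:fixedandorbit}, is that the transfer
\[
\tr \colon \cO(\un{M})(C_p/e)_{C_p} \to \cO(\un{M})(C_p/C_p) = \un{M}(C_p/e)_{C_p}
\]
is the identity. Hence when the Proposition is applied with coefficient Mackey functor $\cO(\un{M})$ we have $\wh{\ker(\tr)} = 0$ and $\wh{\coker(\tr)} = 0$.

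Now I plug $n=1$ into Part (2) of Proposition~\ref{p:homologyofpoint}. The possibly nonzero homology Mackey functors live in $k \in [0,2n] = [0,2]$. The bullet $k=0$ contributes $\wh{\coker(\tr)}$ and the bullet $k=1<2n$ contributes $\wh{\ker(\tr)}$, both of which vanish by the observation above. The bullets indexed by $k = 2i \pm 1 < 2n$ with $i \geq 1$ are vacuous because $2n=2$, and the $p=2$-only bullet $k = 1 = 2n$ does not apply since $n = 1 \neq 1/2$. The only surviving case is $k = 2 = 2n$ (with $i = 1$), contributing $F(\cO(\un{M}))$. Because $F(-)$ depends only on the underlying $\bZ[C_p]$-module and $\cO(\un{M})(C_p/e) = \un{M}(C_p/e)$, this is precisely $F(\un{M})$.

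Therefore $\Sigma^\lambda H\cO(\un{M})$ has its homotopy Mackey functors concentrated in degree $2$, where they equal $F(\un{M})$. Since any $C_p$-spectrum whose homotopy is concentrated in a single degree as a given Mackey functor is equivalent to the corresponding shift of its Eilenberg--MacLane spectrum, we conclude $\Sigma^\lambda H\cO(\un{M}) \simeq \Sigma^2 HF(\un{M})$. There is no real obstacle; the whole argument is bookkeeping with Proposition~\ref{p:homologyofpoint}, and the one bit of content is noticing that the transfer for $\cO(\un{M})$ is an isomorphism, which simultaneously kills the $k=0$ and $k=1$ contributions.
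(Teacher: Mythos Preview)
Your proof is correct and follows essentially the same approach as the paper: apply Proposition~\ref{p:homologyofpoint} with $n=1$ to the Mackey functor $\cO(\un{M})$, use that the transfer for $\cO(\un{M})$ is surjective (indeed an isomorphism after passing to orbits) to kill the $k=0,1$ terms, and identify the surviving $k=2$ term as $F(\un{M})$. The paper additionally remarks that one could argue dually with $F(\un{M})$ and $n=-1$, but your argument already matches the primary one given there.
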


In particular, for any abelian group $A$, $\Sigma^{\lambda} H\un{A}^* \simeq \Sigma^2 H\un{A}$.

\begin{proof}
With $\un{M} = \cO(A)$ and $n=1$ the above result says that $\un{\pi}_k \Sigma^{\lambda} H \cO(A) \cong F(A)$ for $k=2$ and trivial for $k \neq 2$ since both $\wh{\coker(\tr)}=0$ and $\wh{\ker(\tr)} = 0$.

Alternatively, with $\un{M} = F(A)$ and $n=-1$ we have $\un{\pi}_k \Sigma^{-\lambda} H F(A) \cong \cO(A)$ for $k=-2$ and trivial for $k \neq -2$ since both $\wh{\ker(R)} = 0$ and $\wh{\coker(R)} = 0$.
\end{proof}

\section{The slice tower for \texorpdfstring{$\Sigma^V H\un{M}$}{suspensions of HM}} \label{s:slicetower}
The starting point is the following two results:
\begin{enumerate}
\item First, $H\un{M}$ is a zero slice for any Mackey functor $\un{M}$. This follows from \cite[Proposition 4.50]{HHR} after adjusting to the modern definition of slices.
 \item Second, smashing with the regular representation sphere $S^{\rho_G}$ simply shifts slices up by $|G|$. This follows from \cite[Lemma 4.23]{HHR}.
\end{enumerate}

We fix $G=C_p$, and note that smashing with $S^{\rho_{C_p}'}$ also shifts slices up by $|G|$.

We start with a few lemmas. The first one is well known, but we include it for completeness:

\begin{lemma} \label{l:hatslice}
For any $m \in \bZ$ and abelian group $A$, $\Sigma^m H \wh{A}$ is a $pm$-slice.
\end{lemma}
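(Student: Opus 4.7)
The plan is to reduce to the case of a regular-representation suspension, which is already known to be a $pm$-slice. Since $H\wh{A}$ is a $0$-slice (by HHR Proposition 4.50 applied to $\un{M} = \wh{A}$) and smashing with $S^{\rho_{C_p}}$ shifts slices by $p$, the spectrum $\Sigma^{m\rho_{C_p}} H\wh{A}$ is automatically a $pm$-slice. So it suffices to produce an equivalence $\Sigma^{m\rho_{C_p}} H\wh{A} \simeq \Sigma^m H\wh{A}$, i.e., to show that smashing with $S^{m(\rho_{C_p}-1)}$ leaves $H\wh{A}$ unchanged.

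The key step is: for each nontrivial irreducible $C_p$-representation $\lambda(a)$ (and for $\sigma$ when $p=2$), one has $\Sigma^{\lambda(a)} H\wh{A} \simeq H\wh{A}$. This is an immediate consequence of Proposition \ref{p:homologyofpoint} applied with $\un{M} = \wh{A}$ and $V = \lambda(a)$ (so $n=1$ in the notation of case (2)). The crucial observation is that $\wh{A}(C_p/e) = 0$, so every Mackey functor in the list other than $\wh{\coker(\tr)}$ — namely $\wh{\ker(\tr)}$, $\wh{\ker(N_{\un{M}})}$, $\wh{\coker(N_{\un{M}})}$, $F(\un{M})$, and their $(-)$-twisted variants at $p=2$ — depends only on $\un{M}(C_p/e)$ and therefore vanishes, while $\wh{\coker(\tr)}$ reduces to $\wh{A}$. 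Hence $\un{\pi}_\ast(\Sigma^{\lambda(a)} H\wh{A})$ is concentrated in degree $0$ with value $\wh{A}$, and the spectrum is Eilenberg--MacLane, hence equivalent to $H\wh{A}$.

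Iterating this equivalence (and invoking part (3) of Proposition \ref{p:homologyofpoint} to handle the inverse $\Sigma^{-\lambda(a)}$) gives $\Sigma^V H\wh{A} \simeq H\wh{A}$ for every virtual representation $V$ with $V^{C_p} = 0$. Since $\rho_{C_p} - 1 = \bigoplus_{i=1}^{(p-1)/2} \lambda(i)$ for $p$ odd, and $\rho_{C_2} - 1 = \sigma$, the virtual representation $m(\rho_{C_p} - 1)$ is exactly of this type, so $\Sigma^{m(\rho_{C_p} - 1)} H\wh{A} \simeq H\wh{A}$, and therefore $\Sigma^{m\rho_{C_p}} H\wh{A} \simeq \Sigma^m H\wh{A}$, finishing the argument.

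The main obstacle — such as it is — is purely the bookkeeping in the first step: one must verify that among all the Mackey functors appearing in Proposition \ref{p:homologyofpoint}, only $\wh{\coker(\tr)}$ involves the group $\un{M}(C_p/C_p)$ nontrivially when $\un{M}(C_p/e) = 0$. Once that is noted, the rest of the proof is a formal reduction using the $\rho$-shift property of the slice filtration.
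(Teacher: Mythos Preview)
Your proof is correct and follows essentially the same route as the paper: both arguments establish $\Sigma^m H\wh{A} \simeq \Sigma^{m\rho_{C_p}} H\wh{A}$ by invoking Proposition~\ref{p:homologyofpoint} with $\un{M}=\wh{A}$ and observing that all the relevant Mackey functors except $\wh{\coker(\tr)}$ (or $\wh{\ker(R)}$, for negative $n$) vanish because $\wh{A}(C_p/e)=0$. The only cosmetic difference is that the paper computes $\un{\pi}_*\Sigma^{m\rho_{C_p}} H\wh{A}$ in one shot, whereas you peel off the $\lambda(a)$'s one at a time; either way the conclusion is the same.
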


\begin{proof}
From the above calculation of the homology of a point we have
\[
 \un{\pi}_k \Sigma^m H \wh{A} \cong \un{\pi}_k \Sigma^{m \rho_{C_p}} H \wh{A} \cong \begin{cases} \wh{A} \quad & \textnormal{if $k=m$} \\ 0 \quad & \textnormal{if $k \neq m$} \end{cases}
\]
Hence $\Sigma^m H \wh{A} \simeq \Sigma^{m \rho_{C_p}} H \wh{A}$.

We know that $H \wh{A}$ is a $0$-slice, and that suspending by a multiple of $\rho_{C_p}$ (or $\rho_{C_p}'$) simply shifts slices up or down, so the result follows.
\end{proof}

\begin{lemma} \label{l:plusslice}
If the restriction map for $\un{M}$ is injective, $\Sigma^{k \lambda} H \un{M}$ is a $2k$-slice for $- \frac{p}{2} \leq k \leq 0$.
\end{lemma}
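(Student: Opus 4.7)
The plan is to verify the two defining properties of a $2k$-slice separately: $\Sigma^{k\lambda} H\un{M} \in \tau_{2k}$ and $\Sigma^{k\lambda} H\un{M} \leq 2k$. The base case $k = 0$ is precisely the fact that $H\un{M}$ is a $0$-slice.

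For the ``$\geq 2k$'' direction, which needs no hypothesis on $R$, I would use the $G$-CW structure on $S^{k\lambda}$ produced (dually) in the proof of Proposition \ref{p:homologyofpoint}: a single $C_p/C_p$-cell in dimension $0$ together with $C_p/e$-cells in each dimension $j$ for $2k \leq j \leq -1$. Smashing with $H\un{M}$ yields a filtration of $\Sigma^{k\lambda} H\un{M}$ whose associated graded pieces are the induced $j$-slices $\Sigma^j (C_{p+} \wedge H\un{M})$ for $2k \leq j \leq -1$ and the $0$-slice $H\un{M}$ on top. Each piece lies in $\tau_{2k}$, and $\tau_{2k}$ is closed under extensions, so $\Sigma^{k\lambda} H\un{M} \in \tau_{2k}$.

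For the ``$\leq 2k$'' direction, I would verify that $[\wh{S}, \Sigma^{k\lambda} H\un{M}] = 0$ for the generators $\wh{S}$ of $\tau_{2k+1}$, namely $C_{p+} \wedge S^j$ for $j > 2k$ and $S^{m \rho_{C_p}}$ for $mp > 2k$. The underlying generators are handled immediately, since the underlying spectrum of $\Sigma^{k\lambda} H\un{M}$ is $\Sigma^{2k} H\un{M}(C_p/e)$, concentrated in degree $2k$. For the equivariant generators I rewrite
\[
 [S^{m \rho_{C_p}}, \Sigma^{k\lambda} H\un{M}]^{C_p} = \un{\pi}_0\bigl(\Sigma^{-m\rho_{C_p} + k\lambda} H\un{M}\bigr)(C_p/C_p)
\]
and apply Proposition \ref{p:homologyofpoint} to $\Sigma^{-m\rho_{C_p} + k\lambda} H\un{M}$. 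When $m \geq 1$ the integer part of the suspension index is $-m \leq -1$, and part (3) of the proposition places all nonzero homotopy Mackey functors strictly below degree $0$, giving vanishing. The critical case is $m = 0$, where the relevant $\un{\pi}_0$ equals $\wh{\ker(R)}$ by part (3) applied at $q = 0$; this vanishes precisely because $R$ is injective.

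The main obstacle, and the reason for the bound $k \geq -p/2$, is the would-be case $m = -1$. Outside the stated range the inequality $mp > 2k$ would also be satisfied for $m = -1$, and in that case Proposition \ref{p:homologyofpoint} produces $\wh{\coker(R)}$ (or $\un{\coker(R)}_-$ at the boundary when $p = 2$) rather than zero; these are generally nonzero for Mackey functors whose $R$ is injective but not surjective. The hypothesis $k \geq -p/2$ is exactly what ensures $m = 0$ is the only nonpositive value of $m$ requiring a test, so that the injectivity of $R$ is all that is needed.
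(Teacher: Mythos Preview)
Your proof is correct and the ``$\leq 2k$'' half is essentially the paper's argument: both test the slice cells and use Proposition~\ref{p:homologyofpoint} to see that the $m=0$ generator is exactly where the hypothesis $\ker(R)=0$ enters, with the bound $k\geq -p/2$ preventing $m=-1$ from appearing. One small point of phrasing: you state the check as $[\wh S,X]=0$ for generators, but strictly one needs the mapping space from each generator to be contractible, since the generating set $\{S^{m\rho}\}$ is not closed under suspension. Your actual computation via Proposition~\ref{p:homologyofpoint} does give this---for $m\geq 1$ all homotopy lies in negative degrees, and for $m=0$ it lies in degrees $\leq 0$ with the degree-$0$ part killed by injectivity of $R$---so this is only a matter of wording.

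The ``$\geq 2k$'' half is where you and the paper genuinely diverge. You build $\Sigma^{k\lambda}H\un{M}$ from its cellular filtration, observe that the induced pieces $\Sigma^j(C_{p+}\wedge H\un{M})$ are $j$-slices with $j\geq 2k$ and the top piece $H\un{M}$ is a $0$-slice, and conclude by closure of $\tau_{2k}$ under extensions. The paper instead invokes the geometric-fixed-point criterion of \cite[\S 11.1D]{HHR21}: $X\geq 2k$ iff $\pi_m\Phi^H X=0$ for $m<2k/|H|$, and then notes $\Phi^{C_p}(\Sigma^{k\lambda}H\un{M})\simeq\Phi^{C_p}(H\un{M})$ since $\lambda$ has no fixed points. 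Your route is more self-contained and stays entirely within the paper's own toolkit; the paper's route is shorter once one is willing to import that external characterization.
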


\begin{proof}
Let $X = \Sigma^{k \lambda} H \un{M}$. The result clearly holds when $k=0$, so we assume $k < 0$. We need to check that $X \leq 2k$ and $X \geq 2k$.

To check that $X \leq 2k$, it suffices to check that the space $Sp^{C_p}(\wh{S}, X)$ of $C_p$-equivariant maps is contractible for each $\wh{S} = (C_p)_+ \sma_H S^{m\rho_H}$ with $\dim(\wh{S}) > 2k$.

This is clear for $H=e$. For $H=C_p$, the conditions on $k$ imply that $m \geq 0$. For $m=0$, we have $\wh{S} = S^0$, so for any $C_p$-spectrum $X$ we get $\pi_q Map(\wh{S}, X) \simeq \un{\pi}_q X (C_p/C_p)$ for $q \geq 0$. The result for $m=0$ then follows from Proposition \ref{p:homologyofpoint} Part (2), since $\pi_0 Sp^{C_p}(S^0, X) \cong \un{H}_0(S^{k \lambda}; \un{M})(C_p/C_p) \cong \ker(R)$ and $\ker(R) = 0$ by assumption.

For $m > 0$ we can use $\rho_{C_p}'$ in place of $\rho_{C_p}$ and we get $Sp^{C_p}(S^{m \rho_{C_p}'}, \Sigma^{k \lambda} H \un{M}) \simeq Sp^{C_p}(S^m, \Sigma^{(k-\frac{m(p-1)}{2})\lambda} H\un{M})$. Since $S^{(k-\frac{m(p-1)}{2})\lambda}$ only has cells in non-positive dimension the result for $m > 0$ follows.

To check that $X \geq 2k$, we can use the alternative characterization from \cite[\S 11.1D]{HHR21}: $X \geq 2k$ if and only if $\pi_m \Phi^H X = 0$ for $m < 2k/|H|$. This is immediate for $H=e$, and for $H=C_p$ the conditions on $k$ imply that $m < 0$. But $\Phi^{C_p}(\Sigma^{k \lambda} H \un{M}) \simeq \Phi^{C_p}(H\un{M})$, so this also follows.
\end{proof}

\begin{corollary} \label{c:plusslice}
Suppose the restriction map for $\un{M}$ is injective. Let $V = m \oplus \bigoplus n_i \lambda_i$, and let $n = \sum n_i$. Define $r = (p-1)m - 2n$. If $r \in [0, p]$ then $\Sigma^V H\un{M}$ is an $(m+2n)$-slice.
\end{corollary}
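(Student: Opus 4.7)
The strategy is to reduce to the special case $V' = m \oplus n\lambda$ using the invariance of the slice tower under smashing with $S^{\lambda-\lambda(a)}$, and then split off enough copies of $\rho_{C_p}'$ to put the remainder in the range where Lemma~\ref{l:plusslice} applies.

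First I would invoke the observation from Section~2 that smashing with $S^{\lambda-\lambda(a)}$ preserves slices. Writing $V - V' = \sum n_i(\lambda(i)-\lambda)$, this lets me replace $V$ by $V' = m \oplus n\lambda$ without affecting whether $\Sigma^V H\un{M}$ is an $(m+2n)$-slice. (For $p=2$ no reduction is needed.) Next I would decompose
\[
 V' = m\,\rho_{C_p}' + k'\lambda, \qquad k' = n - \frac{(p-1)m}{2},
\]
and check by direct computation that $k' = -r/2$. Thus the hypothesis $r \in [0,p]$ translates exactly into $k' \in [-p/2,0]$. For $p$ odd, $(p-1)m$ is automatically even, so $k'$ is an integer; for $p=2$, $k'$ may be a half-integer, but the proof of Lemma~\ref{l:plusslice} goes through verbatim (treating $\tfrac{1}{2}\lambda = \sigma$).

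By Lemma~\ref{l:plusslice}, $\Sigma^{k'\lambda}H\un{M}$ is a $2k'$-slice. Smashing with $S^{m\rho_{C_p}'}$ shifts slices up by $pm$ (the slice-shifting fact recorded at the beginning of this section, which applies equally to $\rho_{C_p}'$), so $\Sigma^{V'}H\un{M}$ is a $(pm+2k')$-slice. The arithmetic
\[
 pm + 2k' \;=\; pm - r \;=\; pm - (p-1)m + 2n \;=\; m + 2n
\]
gives the desired conclusion, and the first reduction transfers it from $\Sigma^{V'}H\un{M}$ back to $\Sigma^{V}H\un{M}$.

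The only real content lies in Lemma~\ref{l:plusslice}; the remaining obstacle is just bookkeeping, namely making sure the parity of $r$, the half-integer $p=2$ conventions, and the two reduction steps compose correctly.
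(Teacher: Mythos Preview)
Your argument is correct and is precisely the intended deduction: the paper states the corollary without proof immediately after Lemma~\ref{l:plusslice}, and your reduction to $V' = m \oplus n\lambda$, decomposition $V' = m\rho_{C_p}' + k'\lambda$ with $k' = -r/2 \in [-p/2,0]$, and application of the lemma together with the regular-representation shift is exactly the routine unpacking the paper leaves to the reader.
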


\begin{lemma} \label{l:0plusslice}
If the transfer map for $\un{M}$ is surjective, $\Sigma^{k \lambda} H \un{M}$ is a $2k$-slice for $0 \leq k \leq \frac{p}{2}$.
\end{lemma}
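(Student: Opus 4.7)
The plan is to mirror the argument for Lemma \ref{l:plusslice}, with surjectivity of $\tr$ playing the role that injectivity of $R$ played there. Let $X = \Sigma^{k\lambda} H\un{M}$. The case $k=0$ is covered by the fact that $H\un{M}$ is a $0$-slice, so I focus on $0 < k \leq p/2$ and verify the two conditions $X \leq 2k$ and $X \geq 2k$ separately.

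To show $X \geq 2k$, I will apply the geometric fixed point criterion from \cite[\S 11.1D]{HHR21}: $X \geq 2k$ iff $\pi_m \Phi^H X = 0$ for $m < 2k/|H|$. The case $H = e$ is immediate since the underlying spectrum is $\Sigma^{2k} HM$. For $H = C_p$, the identity $(k\lambda)^{C_p} = 0$ gives $\Phi^{C_p} X \simeq \Phi^{C_p} H\un{M}$, and the bound $k \leq p/2$ gives $m < 2k/p \leq 1$, so only $m \leq 0$ matters. Negative homotopy of $\Phi^{C_p} H\un{M}$ vanishes for general reasons, and $\pi_0 \Phi^{C_p} H\un{M} \cong \coker(\tr)$ is zero precisely by the standing hypothesis. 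This is the one step where the hypothesis enters, and it mirrors the role of $\ker(R) = 0$ in Lemma \ref{l:plusslice}.

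To show $X \leq 2k$, I will verify that $Sp^{C_p}(\wh{S}, X)$ is contractible for every generator $\wh{S} = (C_p)_+ \sma_H S^{m\rho_H}$ of $\tau_{2k+1}$. The case $H = e$ is immediate. For $H = C_p$, the condition $mp > 2k$ combined with $k \leq p/2$ forces $m \geq 1$. Replacing $\rho_{C_p}$ by $\rho_{C_p}' = 1 + \tfrac{p-1}{2}\lambda$, which preserves the slice filtration, reduces the required vanishing to $\un{\pi}^{C_p}_{q + m + N\lambda} H\un{M} = 0$ for all $q \geq 0$, where $N = \tfrac{m(p-1)}{2} - k$. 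A quick check using $m \geq 1$ and $k \leq p/2$ shows that $N \geq 0$ in all admissible $(m,k)$. Proposition \ref{p:homologyofpoint}(3) (or, in the edge case $N = 0$, the vanishing of $\un{\pi}_n H\un{M}$ for $n \neq 0$) then bounds the nonzero integer degree $q + m$ to $[-2N, 0]$, which is incompatible with $q \geq 0$ and $m \geq 1$.

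The key step to get right is the vanishing of $\pi_0 \Phi^{C_p} H\un{M}$, which is where transfer surjectivity is used essentially. An alternative derivation, when $\un{M}(C_p/e)$ is torsion, is to apply Brown--Comenetz duality to Lemma \ref{l:plusslice}; but the direct argument above works uniformly for arbitrary $\un{M}$.
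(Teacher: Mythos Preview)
Your argument is correct and matches the paper's approach: the paper's proof is just the one-line remark ``similar to the previous lemma, after noting that $\pi_0 \Phi^{C_p}(H\un{M}) = 0$ since $\Phi^{C_p}$ kills transfers and the transfer map is surjective,'' and you have faithfully unpacked that. In particular you correctly observe that the roles swap relative to Lemma~\ref{l:plusslice}: here the hypothesis enters in the $X \geq 2k$ check (via $\pi_0\Phi^{C_p}$) rather than in the $X \leq 2k$ check, since for $k>0$ the case $m=0$ never arises among the generators of $\tau_{2k+1}$. One tiny remark: your claimed bound $N \geq 0$ does hold, but the justification ``$m \geq 1$ and $k \leq p/2$'' is not quite enough by itself at $p=2$, $k=1$ (where $m=1$ would give $N=-\tfrac12$); you need the full constraint $mp > 2k$, which forces $m \geq 2$ in that case.
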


\begin{proof}
This is similar to the previous lemma, after noting that $\pi_0 \Phi^{C_p}(H\un{M}) = 0$ since $\Phi^{C_p}$ kills transfers and the transfer map is surjective by assumption.
\end{proof}

\begin{corollary} \label{c:0plusslice}
Suppose the transfer map for $\un{M}$ is surjective. Let $V = m \oplus \bigoplus n_i \lambda_i$, and let $n = \sum n_i$. Define $r = (p-1)m - 2n$. If $r \in [-p, 0]$ then $\Sigma^V H\un{M}$ is an $(m+2n)$-slice.
\end{corollary}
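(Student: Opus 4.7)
The plan is to reduce to Lemma \ref{l:0plusslice} by two standard moves, paralleling the passage from Lemma \ref{l:plusslice} to Corollary \ref{c:plusslice}.

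First I would replace $V$ by the simpler representation $V' = m \oplus n\lambda$. Since $V - V' = \bigoplus n_i \lambda(i) - n\lambda$ has total $\lambda$-dimension zero, the sphere $S^{V-V'}$ is a product of Picard elements of the form $S^{\lambda - \lambda(a)}$, and the Hill--Yarnall result (Prop.\ 4.1 of \cite{HiYa18}) quoted in Section 2 says that smashing with any such Picard element preserves slice position. Hence it suffices to show $\Sigma^{V'} H\un{M}$ is an $(m+2n)$-slice. Next I would peel off a multiple of the regular representation: write $V' = m\rho_{C_p}' + k\lambda$, where $\rho_{C_p}' = 1 \oplus \tfrac{p-1}{2}\lambda$ and $k = n - \tfrac{m(p-1)}{2} = -r/2$. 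The hypothesis $r \in [-p,0]$ translates exactly to $k \in [0, p/2]$, which is the range covered by Lemma \ref{l:0plusslice}.

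Applying that lemma, $\Sigma^{k\lambda} H\un{M}$ is a $2k$-slice. Since smashing with $S^{\rho_{C_p}'}$ shifts slices up by $|C_p| = p$ (as noted just before Lemma \ref{l:hatslice}), smashing with $S^{m\rho_{C_p}'}$ shifts slices up by $mp$, making $\Sigma^{V'} H\un{M}$ a $(2k + mp)$-slice. The arithmetic identity $2k + mp = 2n - m(p-1) + mp = m+2n$ then closes the argument.

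I do not foresee any real obstacle: the argument is essentially a mirror of the proof of Corollary \ref{c:plusslice}, with the asymmetric range $[-p,0]$ for $r$ (versus $[0,p]$) arising precisely so that $k$ lands in $[0, p/2]$ (the range of Lemma \ref{l:0plusslice}) rather than $[-p/2, 0]$ (the range of Lemma \ref{l:plusslice}). The only potentially fussy point is Step 1, but Picard invariance of the slice filtration is already on the table from Section 2, so nothing beyond Lemma \ref{l:0plusslice} and bookkeeping is required.
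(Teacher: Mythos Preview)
Your proposal is correct and is exactly the intended argument: the paper states Corollary~\ref{c:0plusslice} without proof because it follows from Lemma~\ref{l:0plusslice} by the same two reductions (Picard invariance to pass from $V$ to $V'=m\oplus n\lambda$, then peeling off $m\rho_{C_p}'$) used implicitly to deduce Corollary~\ref{c:plusslice} from Lemma~\ref{l:plusslice}. Your arithmetic check that $r\in[-p,0]$ corresponds precisely to $k\in[0,p/2]$ and that $2k+mp=m+2n$ is exactly what is needed.
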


The next lemma holds for any finite group $G$, and follow from the properties of a localizing subcategory (see \cite[Proposition 4.45]{HHR}):

\begin{lemma} \label{l:recognizingtower}
Let $X$ be a $G$-spectrum. Suppose we have a tower of fibrations
\[ \xymatrix{
F_0 \ar[d] & F_1 \ar[d] & & F_k \ar[d]^= & \\
X \ar[r] & X_1 \ar[r] & \ldots \ar[r] & X_k \ar[r] & {*}
} \]
where $F_i$ is $n_i$-slice and $n_0 > n_1 > \ldots > n_k$. Then this is the slice tower for $X$.

Similarly, suppose we have a tower of cofibrations
\[ \xymatrix{
 {*} \ar[r] & X^{-k} \ar[r] \ar[d]^= & \ldots \ar[r] & X^{-1} \ar[d] \ar[r] & X \ar[d] \\
 & C^{-k} & & C^{-1} & C^0
} \]
where $C^i$ is $n_i$-slice and $n_0 < n_{-1} < \ldots < n_{-k}$. Then this is the coslice tower for $X$.
\end{lemma}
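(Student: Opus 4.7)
The plan is to use the standard characterization of $P^n X$ coming from Bousfield localization with respect to $\tau_{n+1}$: $P^n X$ is characterized (up to equivalence) as fitting in a fiber sequence $A \to X \to P^n X$ with $A \geq n+1$ and $P^n X \leq n$. Dually, $P_m X$ is characterized by fitting in a fiber sequence $P_m X \to X \to B$ with $P_m X \geq m$ and $B \leq m-1$. The first preparatory observation I would record is that both of the classes $\{X : X \leq n\}$ and $\{X : X \geq m\}$ are closed under extensions: the first from the long exact sequence of $[\wh{S},-]$ applied to a fiber sequence, the second because $\tau_m$ is a localizing (hence triangulated) subcategory.

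For the slice tower half, I would proceed in two inductions. First, by descending induction on $i$, show $X_i \leq n_i$: the base case $X_k = F_k$ is an $n_k$-slice, and the inductive step uses the fiber sequence $F_i \to X_i \to X_{i+1}$ together with $X_{i+1} \leq n_{i+1} < n_i$, $F_i \leq n_i$, and closure under extensions. Next, letting $A_i = \fib(X \to X_i)$, the octahedral axiom applied to $X \to X_i \to X_{i+1}$ yields a fiber sequence $A_i \to A_{i+1} \to F_i$, and an ascending induction (base case $A_1 = F_0 \geq n_0$) gives $A_i \geq n_{i-1}$. Combining, the fiber sequence $A_i \to X \to X_i$ has $A_i \geq n_{i-1}$ and $X_i \leq n_i \leq n_{i-1}-1$, so the uniqueness part of the characterization gives $X_i \simeq P^{n_{i-1}-1} X$. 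Since there are no slices of $X$ in $(n_i, n_{i-1})$ the terms $P^n X$ for $n_i \leq n \leq n_{i-1}-1$ are all equivalent, so the given tower is precisely the slice tower with the intervening identity maps collapsed.

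For the coslice tower half the argument is entirely analogous with arrows reversed. By descending induction on $i$ (i.e., increasing $n_{-i}$) show that $X^{-i} \geq n_{-i}$ using the cofiber sequences $X^{-(i+1)} \to X^{-i} \to C^{-i}$ and closure of $\tau_{n_{-i}}$ under extensions; then the cofiber of $X^{-i} \to X$ is an iterated extension of $C^{-(i-1)}, \ldots, C^0$ and so is $\leq n_{-(i-1)}$. Applying the characterization of $P_m X$ with $m = n_{-(i-1)}+1$ identifies $X^{-i} \simeq P_{n_{-(i-1)}+1} X$, and again the intermediate terms of the coslice tower are equivalences.

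There is no real substantive difficulty in the argument; the whole proof is a careful bookkeeping exercise resting on the universal property of localization and the closure of the two slice conditions under extensions. The only step that requires any delicacy is the octahedral (or $3 \times 3$) identification of the iterated fibers $A_i$, and correspondingly of the iterated cofibers on the coslice side, so that the hypotheses on $F_i$ and $C^i$ transfer into the correct connectivity bounds on $A_i$ and $X/X^{-i}$.
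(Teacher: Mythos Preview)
Your proof is correct and is essentially a fleshed-out version of what the paper does: the paper simply asserts that the lemma ``follows from the properties of a localizing subcategory'' and cites \cite[Proposition~4.45]{HHR}, whereas you have spelled out the standard argument (closure of $\leq n$ and $\geq m$ under extensions, the inductive identification of the iterated fibers/cofibers via the octahedral axiom, and the uniqueness characterization of $P^n X$ and $P_m X$). There is nothing to correct.
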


With this we are ready to describe the slice tower for $\Sigma^V H \un{M}$:

\begin{thm} \label{t:slicetower}
Let $\un{M}$ be a $C_p$-Mackey functor, let $V = m \oplus \bigoplus n_i \lambda_i \in RO(C_p)$, and let $n = \sum n_i$. If $p=2$, allow $n_1=n$ to be a half-integer. Define $r = (p-1)m -2n$. Then the slice tower for $\Sigma^V H\un{M}$ is as follows:
\begin{enumerate}
 \item If $r = 0$, $\Sigma^V H\un{M}$ is an $(m+2n)$-slice.

 \item If $r > 0$, let $k = \lceil \frac{r}{p} \rceil$. Then $\Sigma^V H\un{M} \in [m+2n, pm]$ and the slice tower of $\Sigma^V H\un{M}$ is given by
 \[ \xymatrix @C=0.9pc {
  \Sigma^{m} H \wh{\ker(R)} \ar[d] & \Sigma^{m-1} H \wh{\coker(R)} \ar[d] & \Sigma^{m-2} H\wh{\ker(N_{\un{M}})} \ar[d] & & F_{k-1} \ar[d] & F_k \ar[d]^= & \\
  \Sigma^V H\un{M} \ar[r] & \Sigma^V H \un{\im(R)} \ar[r] & X_2 \ar[r] & \ldots \ar[r] & X_{k-1} \ar[r] & X_k \ar[r] & {*}
 } \]
 Here $X_{2i+2} = \Sigma^{V-2i+i\lambda} HF(\un{M})$ for $i \geq 0$ and $X_{2i+1} = \Sigma^{V-2i+i\lambda} H\un{\im(N_{M})}$ for $i \geq 1$. For $2 \leq i < k$ the fiber $F_i$ of $X_i \to X_{i+1}$ is $\Sigma^{m-i} H \wh{\ker(N_{\un{M}})}$ for $i$ even and $\Sigma^{m-i} H \wh{\coker(N_{\un{M}})}$ for $i$ odd.

 \item If $r < 0$, let $k = -\lceil \frac{-r}{p} \rceil$. Then $\Sigma^V H\un{M} \in [pm, m+2n]$ and the coslice tower of $X$ is given by
 \[ \xymatrix @C=0.9pc {
  {*} \ar[r] & X^k \ar[r] \ar[d]_= & X^{k+1} \ar[r] \ar[d] & \ldots \ar[r] & X^{-2} \ar[r] \ar[d] & \Sigma^V H\un{\im(\tr)} \ar[d] \ar[r] & \Sigma^V H\un{M} \ar[d] \\
  & C^k & C^{k+1} & & \Sigma^{m+2} \wh{\coker(N_{\un{M}})} & \Sigma^{m+1} H \wh{\ker(\tr)} & \Sigma^m H\wh{\coker(\tr)}
 } \]
 Here $X^{2i-2} = \Sigma^{V-2i+i\lambda} H\cO(\un{M})$ for $i \leq 0$ and $X^{2i-1} = \Sigma^{V-2i+i\lambda} H\un{\im(N_{M})}$ for $i \leq -1$. For $k < i \leq -2$ the cofiber $C^i$ of $X^{i-1} \to X^i$ is $\Sigma^{m-i} H \wh{\coker(N_{\un{M}})}$ for $i$ even and $\Sigma^{m-i} H \wh{\ker(N_{\un{M}})}$ for $i$ odd.
\end{enumerate}
\end{thm}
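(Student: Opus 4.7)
The plan is to build the slice tower step by step by splicing together short exact sequences of Mackey functors and then invoking the recognition principle in Lemma \ref{l:recognizingtower}. First, by Example \ref{ex:twistedBurnside} I replace the given $V$ by $V' = m + n\lambda$: twisting by $\un{\cA}^a$ acts only on $\un{M}$ itself (the other Mackey functors named in the theorem are $\un{\cA}^a$-invariant), so the statement is unchanged. Case (1), $r=0$, is then immediate because $V = m\rho_{C_p}'$ and $H\un{M}$ is a $0$-slice, so $\Sigma^V H\un{M}$ is a $pm = (m+2n)$-slice.

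For Case (2) I peel off a hat Mackey functor at each stage. The SES $0 \to \wh{\ker(R)} \to \un{M} \to \un{\im(R)} \to 0$ from Example \ref{ex:SESA} gives a fiber sequence $F_0 \to X_0 \to X_1 = \Sigma^V H\un{\im(R)}$, and its companion $0 \to \un{\im(R)} \to F(\un{M}) \to \wh{\coker(R)} \to 0$ gives $F_1 \to X_1 \to X_2 = \Sigma^V HF(\un{M})$. From there the procedure alternates: at an even position $X_{2i+2} = \Sigma^{V-2i+i\lambda} HF(\un{M})$, I rewrite via Corollary \ref{c:OversusF} as $\Sigma^{V-2(i+1)+(i+1)\lambda} H\cO(\un{M})$ and apply the SES $0 \to \wh{\ker(N_{\un{M}})} \to \cO(\un{M}) \to \un{\im(N_{\un{M}})} \to 0$ to produce $X_{2i+3}$; at an odd position $X_{2i+1} = \Sigma^{V-2i+i\lambda} H\un{\im(N_{\un{M}})}$ the SES $0 \to \un{\im(N_{\un{M}})} \to F(\un{M}) \to \wh{\coker(N_{\un{M}})} \to 0$ produces $X_{2i+2}$. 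Each fiber is a suspension of a hat Mackey functor, and a short computation using Proposition \ref{p:homologyofpoint} shows $\Sigma^V H\wh{A} \simeq \Sigma^m H\wh{A}$ for $V = m + n\lambda$, so the fibers simplify to $\Sigma^{m-i} H\wh{\,\cdot\,}$ and are single $p(m-i)$-slices by Lemma \ref{l:hatslice}, with slice levels strictly decreasing.

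To terminate at $X_k$ with $k = \lceil r/p \rceil$ I verify that $X_k$ is itself an $(m+2n)$-slice, splitting by the parity of $k$. When $k = 2i+1$ is odd, $X_k = \Sigma^{V-2i+i\lambda} H\un{\im(N_{\un{M}})}$ where $\un{\im(N_{\un{M}})}$ has injective restriction, and the reduced parameter is $r - 2pi \in (0, p]$, so Corollary \ref{c:plusslice} applies directly. When $k = 2i+2$ is even, Corollary \ref{c:OversusF} lets me rewrite $X_k$ as $\Sigma^{V-2(i+1)+(i+1)\lambda} H\cO(\un{M})$; now $\cO(\un{M})$ has surjective transfer and the reduced parameter is $r - 2p(i+1) \in (-p, 0]$, so Corollary \ref{c:0plusslice} applies. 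Case (3) proceeds dually using the SESs in Example \ref{ex:SESB} and the cofiber half of Lemma \ref{l:recognizingtower}; after the sign flip the arithmetic mirrors Case (2). I expect the main obstacle to be the bookkeeping that glues the individual cofiber sequences into one coherent tower with the stated intermediate objects $X_i$ (rather than merely pointwise equivalences at each stage), and tracking how the $\cO \leftrightarrow F$ swap via Corollary \ref{c:OversusF} interacts with the norm-map SESs so that the terminal slice level emerges correctly on the nose in each parity.
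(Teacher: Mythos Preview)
Your proposal is correct and follows essentially the same route as the paper: build the tower by applying $\Sigma^V H(-)$ to the short exact sequences of Examples~\ref{ex:SESA}, \ref{ex:SESB}, and \ref{ex:fixedandorbit}, swap $F(\un{M}) \leftrightarrow \cO(\un{M})$ via Corollary~\ref{c:OversusF}, identify each fiber as a $p(m-i)$-slice by Lemma~\ref{l:hatslice}, and invoke Lemma~\ref{l:recognizingtower}. The paper treats $k=1,2$ by hand and then says ``the general case is similar''; your parity-split termination argument with the reduced parameter $r-2pi$ or $r-2p(i+1)$ makes that ``similar'' explicit. One small slip: for $k=1$ the terminal object is $X_1 = \Sigma^V H\un{\im(R)}$, not $\Sigma^V H\un{\im(N_M)}$, but since $\un{\im(R)}$ also has injective restriction, Corollary~\ref{c:plusslice} applies verbatim and nothing changes. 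Your worry about coherence of the tower is unfounded: Lemma~\ref{l:recognizingtower} only asks for \emph{some} tower with slice fibers in strictly decreasing order, and the successive (co)fiber sequences you produce already form such a tower.
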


In case (2), the first two fibers ``fix up'' $\Sigma^V H\un{M}$ so that it looks like $F(\un{M})$, and the rest of the slice tower is essentially $2$-periodic. In case (3), the first two cofibers ``fix up'' $\Sigma^V H\un{M}$ so that it looks like $\cO(\un{M})$, and the rest of the coslice tower is essentially $2$-periodic.

\begin{proof}
Part (1) is clear, since in this case $V' = m \oplus n \lambda$ is a multiple of $\rho_{C_p}'$.

For part (2), we start by noting that the leftmost fiber sequence $\Sigma^m H \wh{\ker(R)} \to \Sigma^V H\un{M} \to \Sigma^V H\un{\im(R)}$ can be obtained by applying $\Sigma^V H(-)$ to the first short exact sequence in Example \ref{ex:SESA}. From Lemma \ref{l:hatslice} we know that $\Sigma^m H \wh{\ker(R)}$ is a $pm$-slice.

If $k=1$, it follows from Corollary \ref{c:plusslice} that $\Sigma^V H \un{\im(R)}$ is an $(m+2n)$-slice and from Lemma \ref{l:recognizingtower} that this is the slice tower for $X$ so in that case we are done.

If $k \geq 2$, the next fiber sequence $\Sigma^{m-1} H \wh{\coker(R)} \to \Sigma^V H\un{\im(R)} \to \Sigma^V HF(\un{M})$ can be obtained by applying $\Sigma^V H(-)$ to the second short exact sequence in Example \ref{ex:SESA} and then rotating it. If $k=2$, the conclusion now follows from Corollary \ref{c:0plusslice} and the first half of Lemma \ref{l:recognizingtower} after using the equivalence in Corollary \ref{c:OversusF} to replace $\Sigma^V H F(\un{M})$ with $\Sigma^{V+\lambda-2} H\cO(\un{M})$.

The remaining maps come from the maps $\cO(\un{M}) \to \un{\im(N_M)} \to F(\un{M})$ in Example \ref{ex:fixedandorbit} plus the equivalence in Corollary \ref{c:OversusF}. The general case is similar, using either Corollary \ref{c:plusslice} or Corollary \ref{c:0plusslice} as well as the first half of Lemma \ref{l:recognizingtower}.

Part (3) is similar, now using the second half of Lemma \ref{l:recognizingtower}.
\end{proof}

In case (2) we can obtain the coslice tower from the slice tower by taking fibers of the various maps from $\Sigma^V H\un{M}$, and in case (3) we can obtain the slice tower from the coslice tower by taking cofibers of the various maps to $\Sigma^V H\un{M}$. We can also use Corollary \ref{c:OversusF} to replace the occurrences of $F(\un{M})$ with $\cO(\un{M})$ in (2), or the occurrences of $\cO(\un{M})$ with $F(\un{M})$ in (3). We state the result in this way for maximal symmetry, and because the description of the slice tower is simpler in case (2) and the description of the coslice tower is simpler in case (3).

\begin{corollary} Let $\un{M}$ be a $C_p$-Mackey functor.
 \begin{enumerate}
  \item If $\ker(R) = 0$ then $\Sigma \un{M}$ is a $1$-slice. If $p=2$ then $\Sigma^2 H\un{M}$ is a $2$-slice.
  \item If in addition $\coker \big( R : \un{M}(C_p/C_p) \to \un{M}(C_p/e)^{C_p} \big) = 0$ then $\Sigma^2 H\un{M}$ is a $2$-slice. If $p=2$ then $\Sigma^3 H\un{M}$ is a $3$-slice and $\Sigma^4 H\un{M}$ is a $4$-slice. If $p=3$ then $\Sigma^3 H\un{M}$ is a $3$-slice.
  \item If in addition $\ker \big( N : \un{M}(C_p/e)_{C_p} \to \un{M}(C_p/e)^{C_p} \big) = 0$ then $\Sigma^3 H\un{M}$ is a $3$-slice. If $p=2$ then $\Sigma^5 H\un{M}$ is a $5$-slice and $\Sigma^6 H\un{M}$ is a $6$-slice. If $p=3$ then $\Sigma^4 H\un{M}$ is a $4$-slice.
  \item If in addition $\coker \big( N : \un{M}(C_p/e)_{C_p} \to \un{M}(C_p/e)^{C_p} \big) = 0$ then $\Sigma^n H\un{M}$ is an $n$-slice for all $n \geq 0$.
 \end{enumerate}

\end{corollary}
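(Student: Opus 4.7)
The approach is to apply Theorem \ref{t:slicetower} with $V = N$ (the trivial representation of dimension $N$), so $m = N$, $n = 0$, $r = (p-1)N$, and $k = \lceil (p-1)N / p \rceil$. The slice tower for $\Sigma^N H\un{M}$ then has $k$ intermediate fibers, each a suspension of one of $\wh{\ker(R)}$, $\wh{\coker(R)}$, $\wh{\ker(N_{\un{M}})}$, $\wh{\coker(N_{\un{M}})}$, together with a terminal piece $X_k$ built from $F(\un{M})$ or $\un{\im(N_{\un{M}})}$. The four cumulative vanishing conditions in the corollary are chosen precisely so that successively more of these hat Mackey functors are zero, forcing the corresponding fibers to collapse.

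First I would tabulate, for each $(N, p)$ pair in the statement, the value of $k$ and the list of intermediate fibers. Part (1) covers exactly the cases with $k = 1$ (namely $N = 1$ for any $p$ and $N = 2$ at $p = 2$), where the sole fiber is $\Sigma^N H\wh{\ker(R)}$. Part (2) adds the cases with $k = 2$ ($N = 2$ for any $p$, $N = 3, 4$ for $p = 2$, and $N = 3$ for $p = 3$), introducing the second fiber $\Sigma^{N-1} H\wh{\coker(R)}$. Part (3) picks up the $k = 3$ cases ($N = 3$ for $p \geq 5$, $N = 5, 6$ for $p = 2$, and $N = 4$ for $p = 3$), adding $\Sigma^{N-2} H\wh{\ker(N_{\un{M}})}$. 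In every such case, the stated vanishing hypotheses force every intermediate fiber to be zero.

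It remains to identify $X_k$ with $\Sigma^N H\un{M}$ itself. Under $\ker(R) = \coker(R) = 0$, the restriction map is an isomorphism $\un{M}(C_p/C_p) \xrightarrow{\sim} \un{M}(C_p/e)^{C_p}$, and the Mackey-functor relation $R \circ \tr = N$ identifies $\un{M}$ with $F(\un{M})$; dually, $\ker(N) = 0$ identifies $\un{\im(N_{\un{M}})}$ with $\cO(\un{M})$, and adding $\coker(N) = 0$ makes $\cO(\un{M}) \cong \un{M}$. Combined with the equivalence $\Sigma^\lambda H\cO(\un{M}) \simeq \Sigma^2 HF(\un{M})$ from Corollary \ref{c:OversusF}, every $X_i$ in the tower reduces to $\Sigma^N H\un{M}$, so the terminal piece $X_k$ is $\Sigma^N H\un{M}$ itself and is an $N$-slice. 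For part (4) the same reasoning works for arbitrary $N \geq 0$, since all four hat Mackey functors vanish simultaneously and the terminal piece is always $\Sigma^N H\un{M}$.

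The main obstacle is the bookkeeping: one must verify that the Mackey-functor isomorphisms respect transfer and restriction, and that the suspension shifts $-2i + i\lambda$ appearing in $X_i$ become trivial once $\cO(\un{M}) \cong F(\un{M}) \cong \un{M}$. Once these identifications are in hand the argument is essentially mechanical, but indexing across the four parts and checking the low-$N$, low-$p$ boundary cases where $k$ crosses thresholds requires some care.
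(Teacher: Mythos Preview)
Your approach is correct and is exactly the intended deduction from Theorem~\ref{t:slicetower}: the paper states this as a corollary with no separate proof, so ``apply the theorem with $V$ trivial and watch the fibers vanish'' is precisely what is meant.

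One simplification: your paragraph on identifying $X_k$ with $\Sigma^N H\un{M}$ via Mackey-functor isomorphisms is unnecessary. Once each fiber $F_i$ vanishes, the maps $X_0 \to X_1 \to \cdots \to X_k$ in the tower are automatically equivalences, so $\Sigma^N H\un{M} = X_0 \simeq X_k$ for free. And the fact that $X_k$ is an $(m+2n)$-slice is already part of the content of Theorem~\ref{t:slicetower} (it is what makes the displayed tower the slice tower, via Lemma~\ref{l:recognizingtower} and Corollaries~\ref{c:plusslice}/\ref{c:0plusslice}). You therefore do not need to verify that the shifts $-2i + i\lambda$ become trivial or that $\un{M} \cong F(\un{M}) \cong \cO(\un{M})$; the argument ends as soon as you observe that the relevant hat Mackey functors are zero. (Also, beware the notational clash: you are using $N$ both for the suspension degree and for the norm map.)
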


For example, at $p=2$ we see that $\Sigma^6 H \un{\bZ}$ is a $6$-slice.

\begin{corollary} Let $\un{M}$ be a $C_p$-Mackey functor.
 \begin{enumerate}
  \item If $\coker(\tr) = 0$ then $\Sigma^{-1} \un{M}$ is a $(-1)$-slice. If $p=2$ then $\Sigma^{-2} H\un{M}$ is a $(-2)$-slice.
  \item If in addition $\ker \big( \tr : \un{M}(C_p/e)_{C_p} \to \un{M}(C_p/C_p) \big) = 0$ then $\Sigma^{-2} H\un{M}$ is a $(-2)$-slice. If $p=2$, $\Sigma^{-3} H\un{M}$ is a $(-3)$-slice and $\Sigma^{-4} H\un{M}$ is a $(-4)$-slice. If $p=3$ then $\Sigma^{-3} H\un{M}$ is a $(-3)$-slice.
  \item If in addition $\coker \big( N : \un{M}(C_p/e)_{C_p} \to \un{M}(C_p/e)^{C_p} \big) = 0$ then $\Sigma^{-3} H\un{M}$ is a $(-3)$-slice. If $p=2$ then $\Sigma^{-5} H\un{M}$ is a $(-5)$-slice and $\Sigma^{-6} H\un{M}$ is a $(-6)$-slice. If $p=3$ then $\Sigma^{-4} H\un{M}$ is a $(-4)$-slice.
  \item If in addition $\ker \big( N : \un{M}(C_p/e)_{C_p} \to \un{M}(C_p/e)^{C_p} \big) = 0$ then $\Sigma^n H\un{M}$ is an $n$-slice for all $n \leq 0$.
 \end{enumerate}

\end{corollary}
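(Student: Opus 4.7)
The plan is to mirror the proof of the previous corollary, now using Theorem~\ref{t:slicetower}(3) (the coslice tower) in place of part~(2). For each suspension $\Sigma^n H\un{M}$ appearing in the statement, with $n < 0$, I apply Theorem~\ref{t:slicetower}(3) to $V = n$; the cumulative hypotheses force the cofibers of the coslice tower to die one by one.

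Unpacking the coslice tower, the two topmost cofibers are $C^0 = \Sigma^n H\wh{\coker(\tr)}$ and $C^{-1} = \Sigma^{n+1} H\wh{\ker(\tr)}$, while the remaining cofibers alternate between $\Sigma^{n-i} H\wh{\coker(N_{\un{M}})}$ for $i$ even and $\Sigma^{n-i} H\wh{\ker(N_{\un{M}})}$ for $i$ odd. Hypothesis~(1) kills $C^0$, (2) additionally kills $C^{-1}$, (3) kills the even-indexed lower cofibers, and (4) kills the odd-indexed ones. Once every cofiber in the relevant range vanishes, $\Sigma^n H\un{M}$ becomes equivalent to the leftmost term $X^k$, which Theorem~\ref{t:slicetower}(3) identifies as $\Sigma^{n-2i+i\lambda} H\cO(\un{M})$ or $\Sigma^{n-2i+i\lambda} H\un{\im(N_{\un{M}})}$ for the appropriate $i$.

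To finish, I verify that $X^k$ is a single slice of degree $n$. Since $\cO(\un{M})$ and $\un{\im(N_{\un{M}})}$ both have surjective transfer by construction, this is a direct application of Corollary~\ref{c:0plusslice} whenever $r$ lands in $[-p, 0]$; otherwise I first invoke Corollary~\ref{c:OversusF} to swap $\cO(\un{M})$ and $F(\un{M})$ and then apply Corollary~\ref{c:plusslice} to the resulting term, whose restriction is injective. The main obstacle is purely this bookkeeping: for each prime $p$ and exponent $n$ singled out in parts~(1)--(3), one must confirm that $r$ (possibly after a $\pm\rho_{C_p}'$-shift) lies in $[-p, 0]$ or $[0, p]$ so that the relevant corollary applies. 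The statement distinguishes $p = 2$ and $p = 3$ precisely because $\lceil (p-1)\lvert n \rvert / p \rceil$ grows more slowly for small $p$, so killing a fixed number of cofibers accommodates larger $\lvert n \rvert$.
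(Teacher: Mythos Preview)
Your approach is correct and matches the paper's intent: the corollary is stated without proof precisely because it is read off directly from Theorem~\ref{t:slicetower}(3). Under each successive hypothesis the corresponding cofibers $C^0, C^{-1}, C^{-2}, \ldots$ vanish, and for the specific values of $n$ listed the length $|k|$ of the coslice tower is small enough that all cofibers other than $C^k = X^k$ are among those killed, leaving $\Sigma^n H\un{M}$ as a single slice.

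Your final paragraph is unnecessary, and slightly off in its bookkeeping. Theorem~\ref{t:slicetower} already asserts that the displayed tower \emph{is} the coslice tower, so $X^k = C^k$ is automatically the $(m+2n)$-slice; you do not need to re-invoke Corollaries~\ref{c:plusslice} or~\ref{c:0plusslice}. Also note that for $k=-1$ the leftmost term is $X^{-1} = \Sigma^V H\un{\im(\tr)}$, not a suspension of $H\cO(\un{M})$ or $H\un{\im(N_M)}$; the general formulas in the theorem only describe $X^i$ for $i \leq -2$. This does not affect the argument, since $\un{\im(\tr)}$ also has surjective transfer (and in any case the theorem already certifies it as a slice), but it is worth stating accurately.
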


For example, at $p=2$ we see that $\Sigma^{-6} H \un{\bQ/\bZ}^*$ is a $(-6)$-slice. This also follows from Brown-Comenetz duality plus the above observation that $\Sigma^6 H\un{\bZ}$ is a $6$-slice.

\begin{example}
Consider $\Sigma^V H\un{\bZ}$. Then the slice tower simplifies, because $\un{\im(N_{\bZ})} = \un{\bZ}^*$ and $\wh{\ker(N_{\bZ})} = 0$ while $\wh{\coker(N_{\bZ})} = \wh{\bZ/p}$.

Define $k$ as in Theorem \ref{t:slicetower}. If $k \in [0,3]$ then $\Sigma^V H\un{\bZ}$ is $(m+2n)$-slice. For $k \geq 4$, let $k' = \lceil \frac{k-3}{2} \rceil$. Then $\Sigma^V H\un{\bZ} \in [m+2n, p(m-3)]$ and the slice tower for $\Sigma^V H \un{\bZ}$ looks as follows:
\[ \xymatrix @C=1.2pc {
  \Sigma^{m-3} H \wh{\bZ/p} \ar[d] & \Sigma^{m-5} H \wh{\bZ/p} \ar[d] & & F_{k'-1} \ar[d] & F_{k'} \ar[d]^= & \\
  \Sigma^V H\un{\bZ} \ar[r] & \Sigma^{V-2+\lambda} H \un{\bZ} \ar[r] & \ldots \ar[r] & X_{k'-1} \ar[r] & X_{k'} \ar[r] & {*}
 } \]
Here $X_i = \Sigma^{V-2i+i\lambda} H\un{\bZ}$ and $F_i = \Sigma^{m-3-2i} \wh{\bZ/p}$ for $0 \leq i < k'$.

For $k < 0$ the coslice tower simplifies in a similar way. This time we let $k' = \lfloor \frac{k}{2} \rfloor$. Then $\Sigma^V H\un{\bZ} \in [pm, m+2n]$ and the coslice tower for $\Sigma^V H\un{\bZ}$ looks as follows:
\[ \xymatrix @C=1.2pc {
  {*} \ar[r] & X^{k'} \ar[r] \ar[d]_= & X^{k'+1} \ar[r] \ar[d] & \ldots \ar[r] & \Sigma^{V+2-\lambda} H\un{\bZ} \ar[d] \ar[r] & \Sigma^V H\un{\bZ} \ar[d] \\
  & C^{k'} & C^{k'+1} & & \Sigma^{m+2} H \wh{\bZ/p} & \Sigma^m H\wh{\bZ/p}
 } \]
This time $X^i = \Sigma^{V-2i+i\lambda} H\un{\bZ}$ and $C^i = \Sigma^{m-2i} H\wh{\bZ/p}$ for $k' < i \leq 0$.
\end{example}

\begin{example}
Consider $\Sigma^V H\un{\bQ/\bZ}^*$. This is Brown-Comenetz dual to $\Sigma^{-V} H\un{\bZ}$, so the slice tower for $\Sigma^V H\un{\bQ/\bZ}$ is Brown-Comentz dual to the coslice tower for $\Sigma^{-V} H\un{\bZ}$ and the coslice tower for $\Sigma^V H\un{\bQ/\bZ}$ is Brown-Comenetz dual to the slice tower for $\Sigma^{-V} H\un{\bZ}$.

Define $k$ as in Theorem \ref{t:slicetower}. If $k \in [-3, 0]$ then $\Sigma^V H\un{\bQ/\bZ}^*$ is $(m+2n)$-slice. $k \leq -4$ we let $k' = \lfloor \frac{k+3}{2} \rfloor$. Then $\Sigma^V H\un{\bQ/\bZ}^* \in [p(m+3), m+2n]$ and the coslice tower for $\Sigma^V H\un{\bZ/\bZ}^*$ looks as follows:
\[ \xymatrix @C=1.2pc {
  {*} \ar[r] & X^{k'} \ar[r] \ar[d]_= & X^{k'+1} \ar[r] \ar[d] & \ldots \ar[r] & \Sigma^{V+2-\lambda} H\un{\bQ/\bZ}^* \ar[d] \ar[r] & \Sigma^V H\un{\bQ/\bZ}^* \ar[d] \\
  & C^{k'} & C^{k'+1} & & \Sigma^{m+5} H \wh{\bZ/p} & \Sigma^{m+3} H\wh{\bZ/p}
 } \]
Here $X^i = \Sigma^{V-2i+i\lambda} H \un{\bQ/\bZ}^*$ and $C^i = \Sigma^{m+3-2i} H\wh{\bZ/p}$ for $k' < i \leq 0$.

For $k > 0$ the slice tower simplifies in a similar way. This time we let $k' = \lceil \frac{k}{2} \rceil$. Then $\Sigma^V H\un{\bZ} \in [m+2n, pm]$ and the slice tower for $\Sigma^V H\un{\bQ/\bZ}^*$ looks as follows:
\[ \xymatrix @C=1.2pc {
  \Sigma^m H \wh{\bZ/p} \ar[d] & \Sigma^{m-2} H \wh{\bZ/p} \ar[d] & & F_{k'-1} \ar[d] & F_{k'} \ar[d]^= & \\
  \Sigma^V H\un{\bQ/\bZ}^* \ar[r] & \Sigma^{V-2+\lambda} H \un{\bQ/\bZ}^* \ar[r] & \ldots \ar[r] & X_{k'-1} \ar[r] & X_{k'} \ar[r] & {*}
 } \]
Here $X_i = \Sigma^{V-2i+i\lambda} H\un{\bQ/\bZ}^*$ and $F_i = \Sigma^{m-2i} \wh{\bZ/p}$ for $0 \leq i < k'$.
\end{example}

\section{The slice spectral sequence for \texorpdfstring{$\Sigma^V H\un{M}$}{suspensions of HM}} \label{s:examples}
This section is longer than is logically necessary, but for the reader's convenience we include a large number of pictures to make it easy to see how the slice spectral sequence changes as we vary $n$ and $k$.

As above we let $V = m \oplus \bigoplus n_i \lambda(i)$ and $n = \sum n_i$. When $n$ is an integer (as opposed to a half-integer, which can only happen at $p=2$) we will use the following symbols for the various Mackey functors appearing in the slice spectral sequence:
\[
\arraycolsep=6pt\def\arraystretch{2.0}
\begin{array}{cccc}
 R = \un{\im(R)} &
 T =\un{\im(\tr)} &
 F = F(\un{M}) &
 \cO = \cO(\un{M}) \\
 \circ = \wh{\ker(N_{\un{M}})} &
 \bullet = \wh{\coker(N_{\un{M}})} &
 \overline{\circ} = \wh{\ker(R)} &
 \overline{\bullet} = \wh{\coker(R)} \\
 \underline{\circ} = \wh{\ker(\tr)} &
 \underline{\bullet} = \wh{\coker(\tr)} &
 \blacktriangle = \wh{\im(R)/\im(N_{\un{M}})} &
 \triangledown = \wh{\ker(N_{\un{M}})/\ker(\tr)}
\end{array}
\]

As in \cite{HHR} we draw the slice spectral sequence with $t-s$ (the total degree) on the horizontal axis and $s$ on the vertical axis. The general shape of the spectral sequence for $\Sigma^V H \un{M}$ depends only on $n$ and $k$ (see Theorem \ref{t:slicetower}), while the exact position of each Mackey functor and the exact length of each differential and extension depend on additional variables. For this reason, when we draw the slice spectral sequence we omit the labels on the axes. To save space we also omit the axes themselves.

For fixed $k > 0$ and $n \in \bZ$, $m$ is determined by
\[
 \frac{p(k-1)+2n}{p-1} < m \leq \frac{pk+2n}{p-1}.
\]
The half-open interval $(\frac{p(k-1)+2n}{p-1}, \frac{pk+2n}{p-1} ]$ has length $\frac{p}{p-1}$. Hence we find that for $p=2$ there are always $2$ possible values of $m$. For $p$ odd there are always at least $1$ possible value of $m$, and sometimes $2$. The same holds for $k < 0$. Hence each of the spectral sequences we are about to describe is indeed the slice spectral sequence for $\Sigma^V H\un{M}$ for some $V$.

The next 5 examples cover $n=0$, $n=1$, $n=-1$, $n \geq 2$ and $n \leq -2$ respectively, so together they cover all possible values of $n$ as long as $p \neq 2$.

\begin{example}
Consider $\Sigma^m H\un{M}$ for various $m$. When $k=0$ the slice spectral sequence has a single non-trivial Mackey functor: $\un{\cA}^a \Box \un{M}$ for some $a$ in filtration $m$ and degree $m$.

When $k=1$, the slice spectral sequence has two non-trivial Mackey functors and a hidden extension. The extension encodes the first short exact sequence in Example \ref{ex:SESA}, or some twisted version of it $0 \to \overline{\circ} \to \cA^a \Box \un{M} \to R \to 0$:

\begin{center}
\begin{sseqdata}[ no axes, struct lines = blue, name = exn0k1, xscale = 0.6, yscale = 0.3, Adams grading, classes = {draw = none} ]
\class["R"](13,0)
\class["{\overline{\circ}}"](13,4)
\structline(13,0)(13,4)
\end{sseqdata}
\printpage[ name = exn0k1 ]
\end{center}

When $k=2$, the slice spectral sequence has a differential and a hidden extension. The differential is surjective and the kernel is $\un{\im(R)}$. This represents the short exact sequence $0 \to R \to F \to \overline{\bullet} \to 0$ in Example \ref{ex:SESA}. The hidden extension then puts $\wh{\ker(R)}$ back in, the same way as for $k=1$:

\begin{center}
\begin{sseqdata}[ no axes, struct lines = blue, name = exn0k2, xscale = 0.6, yscale = 0.3, Adams grading, classes = {draw = none} ]
\class["F"](13,0)
\class["{\overline{\circ}}"](13,6)
\class["{\overline{\bullet}}"](12,5)
\structline(13,0)(13,6)
\d[red]5(13,0)
\end{sseqdata}
\printpage[ name = exn0k2 ]
\end{center}

We could splice the two short exact sequences in Example 4.4 and say that the spectral sequence represents the $4$-term exact sequence
\[
 0 \to \overline{\circ} \to \un{\cA}^a \Box \un{M} \to F \to \overline{\bullet} \to 0
\]

If $k=3$, the slice spectral sequence picks up two more classes and another differential:

\begin{center}
\begin{sseqdata}[ no axes, struct lines = blue, name = exn0k3, xscale = 0.6, yscale = 0.3, Adams grading, classes = {draw = none} ]
\class["F"](13,0)
\class["{\circ}"](12,1)
\class["{\overline{\circ}}"](13,6)
\class["{\overline{\bullet}}"](12,5)
\class["{\circ}"](11,4)
\structline(13,0)(13,6)
\d[red]3(12,1)
\d[red]5(13,0)
\end{sseqdata}
\printpage[ name = exn0k3 ]
\end{center}

For higher values of $k$, the spectral sequence simply picks up additional classes and differentials. For $k=4$ it looks as follows:

\begin{center}
\begin{sseqdata}[ no axes, struct lines = blue, name = exn0k4, xscale = 0.6, yscale = 0.3, Adams grading, classes = {draw = none} ]
\class["F"](13,0)
\class["{\circ}"](12,1)
\class["{\bullet}"](11,2)
\class["{\overline{\circ}}"](13,8)
\class["{\overline{\bullet}}"](12,7)
\class["{\circ}"](11,6)
\class["{\bullet}"](10,5)
\structline(13,0)(13,8)
\d[red]3(11,2)
\d[red]5(12,1)
\d[red]7(13,0)
\end{sseqdata}
\printpage[ name = exn0k4 ]
\end{center}

Each time we increase $k$ by one, the existing differentials and the extension get longer and we put in an additional differential $\circ \to \circ$ or $\bullet \to \bullet$ on the left hand side. This happens for any $V$, as soon as $k$ is sufficiently large.

When $k$ is negative the slice spectral sequence looks slightly different. When $k=-1$ we have a single hidden extension, encoding a twisted version of the first short exact sequence in Example \ref{ex:SESB}:
\begin{center}
\begin{sseqdata}[ no axes, struct lines = blue, name = exn0k-1, xscale = 0.6, yscale = 0.3, Adams grading, classes = {draw = none} ]
\class["\underline{\bullet}"](13,0)
\class["T"](13,6)
\structline(13,0)(13,6)
\end{sseqdata}
\printpage[ name = exn0k-1 ]
\end{center}

When $k=-2$ we have one differential and one extension, encoding the two short exact sequences in Example \ref{ex:SESB}:
\begin{center}
\begin{sseqdata}[ no axes, struct lines = blue, name = exn0k-2, xscale = 0.6, yscale = 0.3, Adams grading, classes = {draw = none} ]
\class["\underline{\bullet}"](13,0)
\class["\underline{\circ}"](14,1)
\class["\cO"](13,6)
\structline(13,0)(13,6)
\d[red]5(14,1)
\end{sseqdata}
\printpage[ name = exn0k-2 ]
\end{center}

Again we can splice the two short exact sequences in Example \ref{ex:SESB} and say that the spectral sequence represents the $4$-term exact sequence
\[
 0 \to \underline{\circ} \to \cO \to \un{\cA}^a \Box \un{M} \to \underline{\bullet} \to 0
\]

For lower values of $k$, the spectral sequence simply picks up additional classes and differentials. For $k=-3$ it looks as follows:
\begin{center}
\begin{sseqdata}[ no axes, struct lines = blue, name = exn0k-3, xscale = 0.6, yscale = 0.3, Adams grading, classes = {draw = none} ]
\class["\underline{\bullet}"](13,0)
\class["\underline{\circ}"](14,1)
\class["{\bullet}"](15,2)
\class["\cO"](13,8)
\class["{\bullet}"](14,7)
\structline(13,0)(13,8)
\d[red]5(15,2)
\d[red]7(14,1)
\end{sseqdata}
\printpage[ name = exn0k-3 ]
\end{center}

Each time we decrease $k$ by one, the existing differentials and the extension get longer and we put in an additional differential $\circ \to \circ$ or $\bullet \to \bullet$ on the right hand side. This happens for any $V$, as soon as $k$ is sufficiently small.

\end{example}

\begin{example}
Consider $\Sigma^{m+\lambda} H\un{M}$ for various $m$. When $k=0$, everything is concentrated in a single slice:
\begin{center}
\begin{sseqdata}[ no axes, struct lines = blue, name = exn1k0, xscale = 0.6, yscale = 0.3, Adams grading, classes = {draw = none} ]
\class["F"](13,0)
\class["\underline{\circ}"](12,1)
\class["\underline{\bullet}"](11,2)
\end{sseqdata}
\printpage[ name = exn1k0 ]
\end{center}

When $k=1$ we pick up a differential and an extension. These can, together, be encoded by the exact sequence
\[
 0 \to \underline{\circ} \to \circ \to \overline{\circ} \to \underline{\bullet} \to \blacktriangle \to 0
\]

\begin{center}
\begin{sseqdata}[ no axes, struct lines = blue, name = exn1k1, xscale = 0.6, yscale = 0.3, Adams grading, classes = {draw = none} ]
\class["F"](13,0)
\class["\circ"](12,1)
\class["{\blacktriangle}"](11,2)
\class["\overline{\circ}"](11,7)
\structline(11,2)(11,7)
\d[red]6(12,1)
\end{sseqdata}
\printpage[ name = exn1k1 ]
\end{center}

When $k=2$ we pick up another differential, so the whole spectral sequence encodes the exact sequence
\[
0 \to \underline{\circ} \to \circ \to \overline{\circ} \to \underline{\bullet} \to \bullet \to \overline{\bullet} \to 0
\]
\begin{center}
\begin{sseqdata}[ no axes, struct lines = blue, name = exn1k2, xscale = 0.6, yscale = 0.3, Adams grading, classes = {draw = none} ]
\class["F"](13,0)
\class["\circ"](12,1)
\class["\bullet"](11,2)
\class["\overline{\circ}"](11,7)
\class["\overline{\bullet}"](10,6)
\structline(11,2)(11,7)
\d[red]4(11,2)
\d[red]6(12,1)
\end{sseqdata}
\printpage[ name = exn1k2 ]
\end{center}

For higher values of $k$ we simply pick up additional classes and differentials as in the previous example: For $k=3$ it looks as follows:
\begin{center}
\begin{sseqdata}[ no axes, struct lines = blue, name = exn1k3, xscale = 0.6, yscale = 0.3, Adams grading, classes = {draw = none} ]
\class["F"](13,0)
\class["\circ"](12,1)
\class["\bullet"](11,2)
\class["\circ"](10,3)
\class["\overline{\circ}"](11,8)
\class["\overline{\bullet}"](10,7)
\class["\circ"](9,6)
\structline(11,2)(11,8)
\d[red]3(10,3)
\d[red]5(11,2)
\d[red]7(12,1)
\end{sseqdata}
\printpage[ name = exn1k3 ]
\end{center}

When $k$ is negative the spectral sequence again looks a bit different. When $k=-1$ the spectral sequence is concentrated in two slice filtrations, but there are no differentials or extensions:
\begin{center}
\begin{sseqdata}[ no axes, struct lines = blue, name = exn1k-1, xscale = 0.6, yscale = 0.3, Adams grading, classes = {draw = none} ]
\class["F"](13,6)
\class["\underline{\circ}"](12,7)
\class["\underline{\bullet}"](11,2)
\end{sseqdata}
\printpage[ name = exn1k-1 ]
\end{center}

When $k=-2$ there are again no differentials or extensions:
\begin{center}
\begin{sseqdata}[ no axes, struct lines = blue, name = exn1k-2, xscale = 0.6, yscale = 0.3, Adams grading, classes = {draw = none} ]
\class["F"](13,6)
\class["\underline{\circ}"](12,3)
\class["\underline{\bullet}"](11,2)
\end{sseqdata}
\printpage[ name = exn1k-2 ]
\end{center}

When $k=-3$ we pick up an extension, encoding the short exact sequence $0 \to T \to F \to \bullet \to 0$:
\begin{center}
\begin{sseqdata}[ no axes, struct lines = blue, name = exn1k-3, xscale = 0.6, yscale = 0.3, Adams grading, classes = {draw = none} ]
\class["T"](13,7)
\class["\bullet"](13,4)
\class["\underline{\circ}"](12,3)
\class["\underline{\bullet}"](11,2)
\structline(13,4)(13,7)
\end{sseqdata}
\printpage[ name = exn1k-3 ]
\end{center}

When $k=-4$ we pick up a differential as well:
\begin{center}
\begin{sseqdata}[ no axes, struct lines = blue, name = exn1k-4, xscale = 0.6, yscale = 0.3, Adams grading, classes = {draw = none} ]
\class["\cO"](13,9)
\class["\circ"](14,5)
\class["\bullet"](13,4)
\class["\underline{\circ}"](12,3)
\class["\underline{\bullet}"](11,2)
\structline(13,4)(13,9)
\d[red]4(14,5)
\end{sseqdata}
\printpage[ name = exn1k-4 ]
\end{center}

For lower values of $k$ we simply pick up additional classes and differentials. For $k=-5$ it looks as follows:
\begin{center}
\begin{sseqdata}[ no axes, struct lines = blue, name = exn1k-5, xscale = 0.6, yscale = 0.3, Adams grading, classes = {draw = none} ]
\class["\cO"](13,9)
\class["\bullet"](14,8)

\class["\bullet"](15,4)
\class["\circ"](14,3)
\class["\bullet"](13,2)
\class["\underline{\circ}"](12,1)
\class["\underline{\bullet}"](11,0)
\structline(13,2)(13,9)
\d[red]4(15,4)
\d[red]6(14,3)
\end{sseqdata}
\printpage[ name = exn1k-5 ]
\end{center}

\end{example}

\begin{example}
Consider $\Sigma^{m-\lambda} H\un{M}$ for various $m$. When $k=0$, everything is concentrated in a single slice:
\begin{center}
\begin{sseqdata}[ no axes, struct lines = blue, name = exn-1k0, xscale = 0.6, yscale = 0.3, Adams grading, classes = {draw = none} ]
\class["\cO"](-13,0)
\class["\overline{\bullet}"](-12,-1)
\class["\overline{\circ}"](-11,-2)
\end{sseqdata}
\printpage[ name = exn-1k0 ]
\end{center}

When $k=-1$ we pick up a differential and an extension:
\begin{center}
\begin{sseqdata}[ no axes, struct lines = blue, name = exn-1k-1, xscale = 0.6, yscale = 0.3, Adams grading, classes = {draw = none} ]
\class["\cO"](-13,0)
\class["\bullet"](-12,-1)
\class["{\triangledown}"](-11,-2)
\class["\underline{\bullet}"](-11,-7)
\structline(-11,-2)(-11,-7)
\d[red]6(-11,-7)
\end{sseqdata}
\printpage[ name = exn-1k-1 ]
\end{center}

When $k=-2$ we pick up another differential:
\begin{center}
\begin{sseqdata}[ no axes, struct lines = blue, name = exn-1k-2, xscale = 0.6, yscale = 0.3, Adams grading, classes = {draw = none} ]
\class["\cO"](-13,0)
\class["\bullet"](-12,-1)
\class["\circ"](-11,-2)
\class["\underline{\bullet}"](-11,-7)
\class["\underline{\circ}"](-10,-6)
\structline(-11,-2)(-11,-7)
\d[red]4(-10,-6)
\d[red]6(-11,-7)
\end{sseqdata}
\printpage[ name = exn-1k-2 ]
\end{center}

For lower values of $k$ we simply pick up additional classes and differentials: For $k=-3$ it looks as follows:
\begin{center}
\begin{sseqdata}[ no axes, struct lines = blue, name = exn-1k-3, xscale = 0.6, yscale = 0.3, Adams grading, classes = {draw = none} ]
\class["\cO"](-13,0)
\class["\bullet"](-12,-1)
\class["\circ"](-11,-2)
\class["\bullet"](-10,-3)
\class["\underline{\bullet}"](-11,-8)
\class["\underline{\circ}"](-10,-7)
\class["\bullet"](-9,-6)
\structline(-11,-2)(-11,-8)
\d[red]3(-9,-6)
\d[red]5(-10,-7)
\d[red]7(-11,-8)
\end{sseqdata}
\printpage[ name = exn-1k-3 ]
\end{center}

When $k$ is positive the spectral sequence again looks a bit different. When $k=1$ the spectral sequence is concentrated in two slice filtrations, but there are no differentials or extensions:
\begin{center}
\begin{sseqdata}[ no axes, struct lines = blue, name = exn-1k1, xscale = 0.6, yscale = 0.3, Adams grading, classes = {draw = none} ]
\class["\cO"](-13,-6)
\class["\overline{\bullet}"](-12,-7)
\class["\overline{\circ}"](-11,-2)
\end{sseqdata}
\printpage[ name = exn-1k1 ]
\end{center}

When $k=2$ there are again no differentials or extensions:
\begin{center}
\begin{sseqdata}[ no axes, struct lines = blue, name = exn-1k2, xscale = 0.6, yscale = 0.3, Adams grading, classes = {draw = none} ]
\class["\cO"](-13,-6)
\class["\overline{\bullet}"](-12,-3)
\class["\overline{\circ}"](-11,-2)
\end{sseqdata}
\printpage[ name = exn-1k2 ]
\end{center}

When $k=3$ we pick up an extension, encoding the short exact sequence $0 \to \circ \to \cO \to R \to 0$:
\begin{center}
\begin{sseqdata}[ no axes, struct lines = blue, name = exn-1k3, xscale = 0.6, yscale = 0.3, Adams grading, classes = {draw = none} ]
\class["R"](-13,-7)
\class["\circ"](-13,-4)
\class["\overline{\bullet}"](-12,-3)
\class["\overline{\circ}"](-11,-2)
\structline(-13,-4)(-13,-7)
\end{sseqdata}
\printpage[ name = exn-1k3 ]
\end{center}

When $k=4$ we pick up a differential as well:
\begin{center}
\begin{sseqdata}[ no axes, struct lines = blue, name = exn-1k4, xscale = 0.6, yscale = 0.3, Adams grading, classes = {draw = none} ]
\class["F"](-13,-9)
\class["\bullet"](-14,-5)
\class["\circ"](-13,-4)
\class["\overline{\bullet}"](-12,-3)
\class["\overline{\circ}"](-11,-2)
\structline(-13,-4)(-13,-9)
\d[red]4(-13,-9)
\end{sseqdata}
\printpage[ name = exn-1k4 ]
\end{center}

For higher values of $k$ we simply pick up additional classes and differentials. For $k=5$ it looks as follows:
\begin{center}
\begin{sseqdata}[ no axes, struct lines = blue, name = exn-1k5, xscale = 0.6, yscale = 0.3, Adams grading, classes = {draw = none} ]
\class["F"](-13,-9)
\class["\circ"](-14,-8)
\class["\circ"](-15,-4)
\class["\bullet"](-14,-3)
\class["\circ"](-13,-2)
\class["\overline{\bullet}"](-12,-1)
\class["\overline{\circ}"](-11,0)
\structline(-13,-2)(-13,-9)
\d[red]4(-14,-8)
\d[red]6(-13,-9)
\end{sseqdata}
\printpage[ name = exn-1k5 ]
\end{center}

\end{example}

\begin{example}
Consider $\Sigma^{m+n\lambda} H\un{M}$ for $n \geq 2$. For $k \geq 0$ the spectral sequence is identical to the one for $\Sigma^{m+\lambda} H\un{M}$, except with $n-1$ additional classes $\circ$ and $n-1$ additional classes $\bullet$. For example, when $n=2$ and $k=2$ the slice spectral sequence looks as follows:
\begin{center}
\begin{sseqdata}[ no axes, struct lines = blue, name = exn2k2, xscale = 0.6, yscale = 0.3, Adams grading, classes = {draw = none} ]
\class["F"](15,-2)
\class["\circ"](14,-1)
\class["\bullet"](13,0)
\class["\circ"](12,1)
\class["\bullet"](11,2)
\class["\overline{\circ}"](11,7)
\class["\overline{\bullet}"](10,6)
\structline(11,2)(11,7)
\d[red]4(11,2)
\d[red]6(12,1)
\end{sseqdata}
\printpage[ name = exn2k2 ]
\end{center}

For $k$ between $-1$ and $-2n$, the slice spectral sequence has no extensions or differentials. For example, when $n=2$ and $k=-3$ the slice spectral sequence looks as follows:
\begin{center}
\begin{sseqdata}[ no axes, struct lines = blue, name = exn2k-3, xscale = 0.6, yscale = 0.3, Adams grading, classes = {draw = none} ]
\class["F"](13,6)
\class["\circ"](12,7)
\class["\bullet"](11,2)
\class["\underline{\circ}"](10,1)
\class["\underline{\bullet}"](9,0)
\end{sseqdata}
\printpage[ name = exn2k-3 ]
\end{center}

For $k \leq -2n-1$ the slice spectral sequence looks like the one for $n=1$, except with $n-1$ additional classes $\circ$ and $n-1$ additional classes $\bullet$. For example, when $n=2$ and $k=-6$ the slice spectral sequence looks as follows:
\begin{center}
\begin{sseqdata}[ no axes, struct lines = blue, name = exn2k-6, xscale = 0.6, yscale = 0.3, Adams grading, classes = {draw = none} ]
\class["\cO"](13,9)
\class["\circ"](14,5)
\class["\bullet"](13,4)
\class["\circ"](12,3)
\class["\bullet"](11,2)
\class["\underline{\circ}"](10,1)
\class["\underline{\bullet}"](9,0)
\structline(13,4)(13,9)
\d[red]4(14,5)
\end{sseqdata}
\printpage[ name = exn2k-6 ]
\end{center}

\end{example}

\begin{example}
Consider $\Sigma^{m+n\lambda} H\un{M}$ for $n \leq -2$. For $k \leq 0$ the spectral sequence is identical to the one for $\Sigma^{m-\lambda} H\un{M}$, except with $-n-1$ additional classes $\circ$ and $-n-1$ additional classes $\bullet$. For example, when $n=-2$ and $k=-2$ the slice spectral sequence looks as follows:
\begin{center}
\begin{sseqdata}[ no axes, struct lines = blue, name = exn-2k-2, xscale = 0.6, yscale = 0.3, Adams grading, classes = {draw = none} ]
\class["\cO"](-15,2)
\class["\bullet"](-14,1)
\class["\circ"](-13,0)
\class["\bullet"](-12,-1)
\class["\circ"](-11,-2)
\class["\underline{\bullet}"](-11,-7)
\class["\underline{\circ}"](-10,-6)
\structline(-11,-2)(-11,-7)
\d[red]4(-10,-6)
\d[red]6(-11,-7)
\end{sseqdata}
\printpage[ name = exn-2k-2 ]
\end{center}

For $k$ between $1$ and $-2n$, the slice spectral sequence has no extensions or differentials. For example, when $n=-2$ and $k=3$ the slice spectral sequence looks as follows:
\begin{center}
\begin{sseqdata}[ no axes, struct lines = blue, name = exn-2k3, xscale = 0.6, yscale = 0.3, Adams grading, classes = {draw = none} ]
\class["\cO"](-13,-6)
\class["\bullet"](-12,-7)
\class["\circ"](-11,-2)
\class["\overline{\bullet}"](-10,-1)
\class["\overline{\circ}"](-9,0)
\end{sseqdata}
\printpage[ name = exn-2k3 ]
\end{center}

For $k \geq -2n+1$ the slice spectral sequence looks like the one for $n=-1$, except with $-n-1$ additional classes $\circ$ and $-n-1$ additional classes $\bullet$. For example, when $n=-2$ and $k=6$ the slice spectral sequence looks as follows:
\begin{center}
\begin{sseqdata}[ no axes, struct lines = blue, name = exn-2k6, xscale = 0.6, yscale = 0.3, Adams grading, classes = {draw = none} ]
\class["F"](-13,-9)
\class["\bullet"](-14,-5)
\class["\circ"](-13,-4)
\class["\bullet"](-12,-3)
\class["\circ"](-11,-2)
\class["\overline{\bullet}"](-10,-1)
\class["\overline{\circ}"](-9,0)
\structline(-13,-4)(-13,-9)
\d[red]4(-13,-9)
\end{sseqdata}
\printpage[ name = exn-2k6 ]
\end{center}

\end{example}

To complete the description of the slice spectral sequence we also need to consider the case when $p=2$ and $n$ is a half-integer.

To do this we need a few more Mackey functors, as defined in Example \ref{ex:p=2MFs}:
\begin{eqnarray*}
 M^- & = & \un{\ker(\tr)}_- \qquad \textnormal{for $\un{M}$} \\
 F^- & = & \un{\ker(\tr)}_- \qquad \textnormal{for $F(\un{M})$} \\
 \cO^- & = & \un{\ker(\tr)}_- \qquad \textnormal{for $\cO(\un{M})$} \\
 M_- & = & \un{\coker(R)}_- \qquad \textnormal{for $\un{M}$} \\
 F_- & = & \un{\coker(R)}_- \qquad \textnormal{for $F(\un{M})$} \\
 \cO_- & = & \un{\coker(R)}_- \qquad \textnormal{for $\cO(\un{M})$} \\
\end{eqnarray*}

\begin{example}
When $p=2$, consider $\Sigma^{m+\frac{1}{2} \lambda} H \un{M}$. When $k=0$, everything is concentrated in a single slice:
\begin{center}
\begin{sseqdata}[ no axes, struct lines = blue, name = exn0.5k0, xscale = 0.6, yscale = 0.3, Adams grading, classes = {draw = none} ]
\class["M^-"](13,6)
\class["\underline{\bullet}"](12,7)
\end{sseqdata}
\printpage[ name = exn0.5k0 ]
\end{center}

When $k=1$, we pick up a differential and an extension:
\begin{center}
\begin{sseqdata}[ no axes, struct lines = blue, name = exn0.5k1, xscale = 0.6, yscale = 0.3, Adams grading, classes = {draw = none} ]
\class["F^-"](13,6)
\class["\blacktriangle"](12,7)
\class["\overline{\circ}"](12,11)
\structline(12,7)(12,11)
\d[red]5(13,6)
\end{sseqdata}
\printpage[ name = exn0.5k1 ]
\end{center}

When $k=2$, we pick up one more differential:
\begin{center}
\begin{sseqdata}[ no axes, struct lines = blue, name = exn0.5k2, xscale = 0.6, yscale = 0.3, Adams grading, classes = {draw = none} ]
\class["F^-"](13,4)
\class["\bullet"](12,5)
\class["\overline{\circ}"](12,11)
\class["\overline{\bullet}"](11,10)
\structline(12,5)(12,11)
\d[red]5(12,5)
\d[red]7(13,4)
\end{sseqdata}
\printpage[ name = exn0.5k2 ]
\end{center}

For larger values of $k$, we simply pick up additional classess and differentials as in the previous examples. For $k=3$ it looks as follows:
\begin{center}
\begin{sseqdata}[ no axes, struct lines = blue, name = exn0.5k3, xscale = 0.6, yscale = 0.3, Adams grading, classes = {draw = none} ]
\class["F^-"](13,2)
\class["\bullet"](12,3)
\class["\overline{\circ}"](12,11)
\class["\overline{\bullet}"](11,10)
\class["\circ"](11,4)
\class["\circ"](10,9)
\structline(12,3)(12,11)
\d[red]5(11,4)
\d[red]7(12,3)
\d[red]9(13,2)
\end{sseqdata}
\printpage[ name = exn0.5k3 ]
\end{center}

For $k=-1$ we have no differentials or extensions:
\begin{center}
\begin{sseqdata}[ no axes, struct lines = blue, name = exn0.5k-1, xscale = 0.6, yscale = 0.3, Adams grading, classes = {draw = none} ]
\class["M^-"](13,6)
\class["\underline{\bullet}"](12,2)
\end{sseqdata}
\printpage[ name = exn0.5k-1 ]
\end{center}

For $k=-2$ we pick up a single extension encoding the short exact sequence $0 \to \underline{\circ} \to M^- \to \cO^- \to 0$:
\begin{center}
\begin{sseqdata}[ no axes, struct lines = blue, name = exn0.5k-2, xscale = 0.6, yscale = 0.3, Adams grading, classes = {draw = none} ]
\class["\cO^-"](13,6)
\class["\underline{\bullet}"](12,1)
\class["\underline{\circ}"](13,2)
\structline(13,2)(13,6)
\end{sseqdata}
\printpage[ name = exn0.5k-2 ]
\end{center}

For $k=-3$ we pick up a differential as well, and together with the differential these encode the $4$-term exact sequence $0 \to \bullet \to \cO_- \to M^- \to \underline{\circ} \to 0$:
\begin{center}
\begin{sseqdata}[ no axes, struct lines = blue, name = exn0.5k-3, xscale = 0.6, yscale = 0.3, Adams grading, classes = {draw = none} ]
\class["\cO_-"](13,8)
\class["\underline{\bullet}"](12,1)
\class["\underline{\circ}"](13,2)
\class["\bullet"](14,3)
\structline(13,2)(13,8)
\d[red]5(14,3)
\end{sseqdata}
\printpage[ name = exn0.5k-3 ]
\end{center}

For lower values of $k$ we simply pick up additional classes and differentials in the same way as in the previous examples.
\end{example}

\begin{example}
Next consider $\Sigma^{m-\frac{1}{2} \lambda} H \un{M}$. When $k=0$, everything is concentrated in a single slice:
\begin{center}
\begin{sseqdata}[ no axes, struct lines = blue, name = exn-0.5k0, xscale = 0.6, yscale = 0.3, Adams grading, classes = {draw = none} ]
\class["M_-"](-13,-6)
\class["\overline{\circ}"](-12,-7)
\end{sseqdata}
\printpage[ name = exn-0.5k0 ]
\end{center}

When $k=-1$, we pick up a differential and an extension:
\begin{center}
\begin{sseqdata}[ no axes, struct lines = blue, name = exn-0.5k-1, xscale = 0.6, yscale = 0.3, Adams grading, classes = {draw = none} ]
\class["\cO_-"](-13,-6)
\class["\triangledown"](-12,-7)
\class["\underline{\bullet}"](-12,-11)
\structline(-12,-7)(-12,-11)
\d[red]5(-12,-11)
\end{sseqdata}
\printpage[ name = exn-0.5k-1 ]
\end{center}

When $k=-2$, we pick up one more differential:
\begin{center}
\begin{sseqdata}[ no axes, struct lines = blue, name = exn-0.5k-2, xscale = 0.6, yscale = 0.3, Adams grading, classes = {draw = none} ]
\class["\cO_-"](-13,-4)
\class["\circ"](-12,-5)
\class["\underline{\bullet}"](-12,-11)
\class["\underline{\circ}"](-11,-10)
\structline(-12,-5)(-12,-11)
\d[red]5(-11,-10)
\d[red]7(-12,-11)
\end{sseqdata}
\printpage[ name = exn-0.5k-2 ]
\end{center}

For lower values of $k$, we simply pick up additional classess and differentials. For $k=-3$ it looks as follows:
\begin{center}
\begin{sseqdata}[ no axes, struct lines = blue, name = exn-0.5k-3, xscale = 0.6, yscale = 0.3, Adams grading, classes = {draw = none} ]
\class["\cO-"](-13,-2)
\class["\bullet"](-12,-3)
\class["\overline{\circ}"](-12,-11)
\class["\bullet"](-11,-10)
\class["\circ"](-11,-4)
\class["\circ"](-10,-9)
\structline(-12,-3)(-12,-11)
\d[red]5(-10,-9)
\d[red]7(-11,-10)
\d[red]9(-12,-11)
\end{sseqdata}
\printpage[ name = exn-0.5k-3 ]
\end{center}

For $k=1$ we have no differentials or extensions:
\begin{center}
\begin{sseqdata}[ no axes, struct lines = blue, name = exn-0.5k1, xscale = 0.6, yscale = 0.3, Adams grading, classes = {draw = none} ]
\class["M_-"](-13,-6)
\class["\overline{\circ}"](-12,-2)
\end{sseqdata}
\printpage[ name = exn-0.5k1 ]
\end{center}

For $k=2$ we pick up a single extension:
\begin{center}
\begin{sseqdata}[ no axes, struct lines = blue, name = exn-0.5k2, xscale = 0.6, yscale = 0.3, Adams grading, classes = {draw = none} ]
\class["F_-"](-13,-6)
\class["\overline{\circ}"](-12,-1)
\class["\overline{\bullet}"](-13,-2)
\structline(-13,-2)(-13,-6)
\end{sseqdata}
\printpage[ name = exn-0.5k2 ]
\end{center}

For $k=3$ we pick up a differential as well:
\begin{center}
\begin{sseqdata}[ no axes, struct lines = blue, name = exn-0.5k3, xscale = 0.6, yscale = 0.3, Adams grading, classes = {draw = none} ]
\class["F^-"](-13,-8)
\class["\overline{\circ}"](-12,-1)
\class["\overline{\bullet}"](-13,-2)
\class["\circ"](-14,-3)
\structline(-13,-2)(-13,-8)
\d[red]5(-13,-8)
\end{sseqdata}
\printpage[ name = exn-0.5k3 ]
\end{center}

For larger values of $k$ we simply pick up additional classes and differentials in the same way as in the previous examples.
\end{example}

\begin{example}
For $n \geq 3/2$ a half-integer, the slice spectral sequence looks as follows: For $k \geq 0$ it looks the same as for $n \geq 1$ an integer, but with $F$ replaced by
\begin{center}
\begin{sseqdata}[ no axes, struct lines = blue, name = exn1.5k0, xscale = 0.6, yscale = 0.3, Adams grading, classes = {draw = none} ]
\class["F^-"](1,0)
\class["\bullet"](0,1)
\end{sseqdata}
\printpage[ name = exn1.5k0 ]
\end{center}

For $-2n \leq k \leq -1$, the slice spectral has no differentials or extensions. For example, for $n=3/2$ and $k=-2$ it looks as follows:
\begin{center}
\begin{sseqdata}[ no axes, struct lines = blue, name = exn1.5k-2, xscale = 0.6, yscale = 0.3, Adams grading, classes = {draw = none} ]
\class["F^-"](13,6)
\class["\bullet"](12,7)
\class["\underline{\circ}"](11,2)
\class["\underline{\bullet}"](10,1)
\end{sseqdata}
\printpage[ name = exn1.5k-2 ]
\end{center}

For $k = -2n-1$, we pick up an extension $0 \to \circ \to F^- \to \cO^- \to 0$. For example, when $n=3/2$ and $k=-4$ the spectral sequence looks like this:
\begin{center}
\begin{sseqdata}[ no axes, struct lines = blue, name = exn1.5k-4, xscale = 0.6, yscale = 0.3, Adams grading, classes = {draw = none} ]
\class["\cO^-"](13,8)
\class["\circ"](13,4)
\class["\bullet"](12,3)
\class["\underline{\circ}"](11,2)
\class["\underline{\bullet}"](10,1)
\structline(13,4)(13,8)
\end{sseqdata}
\printpage[ name = exn1.5k-4 ]
\end{center}

For $k < -2n-1$ we pick up additional differentials. For example, when $n=3/2$ and $k=-5$ the spectral sequence looks like this:
\begin{center}
\begin{sseqdata}[ no axes, struct lines = blue, name = exn1.5k-5, xscale = 0.6, yscale = 0.3, Adams grading, classes = {draw = none} ]
\class["\cO_-"](13,10)
\class["\bullet"](14,5)
\class["\circ"](13,4)
\class["\bullet"](12,3)
\class["\underline{\circ}"](11,2)
\class["\underline{\bullet}"](10,1)
\structline(13,4)(13,10)
\d[red]5(14,5)
\end{sseqdata}
\printpage[ name = exn1.5k-5 ]
\end{center}
\end{example}

\begin{example}
Finally, for $n \leq -3/2$ a half-integer, the slice spectral sequence looks as follows: For $k \leq 0$ it looks the same as for $n \leq -1$ an integer, but with $\cO$ replaced by

\begin{center}
\begin{sseqdata}[ no axes, struct lines = blue, name = exn-1.5k0, xscale = 0.6, yscale = 0.3, Adams grading, classes = {draw = none} ]
\class["\cO_-"](-1,0)
\class["\circ"](0,-1)
\end{sseqdata}
\printpage[ name = exn-1.5k0 ]
\end{center}

For $-2n \geq k \geq 1$, the slice spectral has no differentials or extensions. For example, for $n=-3/2$ and $k=2$ it looks as follows:
\begin{center}
\begin{sseqdata}[ no axes, struct lines = blue, name = exn-1.5k2, xscale = 0.6, yscale = 0.3, Adams grading, classes = {draw = none} ]
\class["\cO_-"](-13,-6)
\class["\circ"](-12,-7)
\class["\overline{\bullet}"](-11,-2)
\class["\overline{\circ}"](-10,-1)
\end{sseqdata}
\printpage[ name = exn-1.5k2 ]
\end{center}

For $k = -2n+1$, we pick up an extension $0 \to F_- \to \cO_- \to \bullet \to 0$. For example, when $n=-3/2$ and $k=4$ the spectral sequence looks like this:
\begin{center}
\begin{sseqdata}[ no axes, struct lines = blue, name = exn-1.5k4, xscale = 0.6, yscale = 0.3, Adams grading, classes = {draw = none} ]
\class["F_-"](-13,-8)
\class["\bullet"](-13,-4)
\class["\circ"](-12,-3)
\class["\overline{\bullet}"](-11,-2)
\class["\overline{\circ}"](-10,-1)
\structline(-13,-4)(-13,-8)
\end{sseqdata}
\printpage[ name = exn-1.5k4 ]
\end{center}

For $k > -2n+1$ we pick up additional differentials. For example, when $n=-3/2$ and $k=5$ the spectral sequence looks like this:
\begin{center}
\begin{sseqdata}[ no axes, struct lines = blue, name = exn-1.5k5, xscale = 0.6, yscale = 0.3, Adams grading, classes = {draw = none} ]
\class["F^-"](-13,-10)
\class["\circ"](-14,-5)
\class["\bullet"](-13,-4)
\class["\circ"](-12,-3)
\class["\overline{\bullet}"](-11,-2)
\class["\overline{\circ}"](-10,-1)
\structline(-13,-4)(-13,-10)
\d[red]5(-13,-10)
\end{sseqdata}
\printpage[ name = exn-1.5k5 ]
\end{center}
\end{example}

This completes the description of the slice spectral sequence for $\Sigma^V H\un{M}$ for all primes $p$, Mackey functors $\un{M}$, and virtual $C_p$-representations $V$.

\begin{example}
For a particular Mackey functor, many of the new Mackey functors constructed from it are isomorphic. For example, consider the constant Mackey functor $\un{\bZ}$. Then
\[
\arraycolsep=6pt\def\arraystretch{2.0}
\begin{array}{cccc}
 R = \un{\bZ} &
 T =\un{\bZ}^* &
 F = \un{\bZ} &
 \cO = \un{\bZ}^* \\
 \circ = 0 &
 \bullet = \wh{\bZ/p} &
 \overline{\circ} = 0 &
 \overline{\bullet} = 0 \\
 \underline{\circ} = 0 &
 \underline{\bullet} = \wh{\bZ/p} &
 \blacktriangle = \wh{\bZ/p} &
 \triangledown = 0
\end{array}
\]
Hence the $12$ Mackey functors from the general case have been reduced to $3$.

To pick just one of the examples from before, consider the spectral sequence for $\Sigma^{m+\lambda} H\un{\bZ}$ when $k=3$. It then simplifies so much that it collapses:
\begin{center}
\begin{sseqdata}[ no axes, struct lines = blue, name = exn1k3Z, xscale = 0.6, yscale = 0.3, Adams grading, classes = {draw = none} ]
\class["{\un{\bZ}}"](13,0)
\class["0"](12,1)
\class["{\wh{\bZ/p}}"](11,2)
\class["0"](10,3)
\class["0"](11,8)
\class["0"](10,7)
\class["0"](9,6)
\end{sseqdata}
\printpage[ name = exn1k3Z ]
\end{center}
\end{example}

\begin{example}
A similar simplification happens for $\un{\bF}_p$. At $p=2$ we invite the reader to verify that this recovers \cite[Theorem 3.18]{GuYa} for $V = n \geq 4$. (That result is of course not the main point of loc.\ cit.)
\end{example}


\begin{thebibliography}{10}

\bibitem{An21A}
Vigleik Angeltveit.
\newblock The {P}icard group of the category of ${C}_n$-equivariant stable
  homotopy theory, 2021.

\bibitem{AMR21}
David Ayala, Aaron Mazel-Gee, and Nick Rozenblyum.
\newblock Derived {M}ackey functors and ${C}_{p^n}$-equivariant cohomology,
  2021.

\bibitem{Du06}
Daniel Dugger.
\newblock An {A}tiyah-{H}irzebruch spectral sequence for {$KR$}-theory.
\newblock {\em $K$-Theory}, 35(3-4):213--256 (2006), 2005.

\bibitem{GuYa}
B.~Guillou and C.~Yarnall.
\newblock The {K}lein four slices of {$\Sigma^n H\underline{\bF_2}$}.
\newblock {\em Mathematische Zeitschrift}, 2019.

\bibitem{HHR}
M.~A. Hill, M.~J. Hopkins, and D.~C. Ravenel.
\newblock On the nonexistence of elements of {K}ervaire invariant one.
\newblock {\em Ann. of Math. (2)}, 184(1):1--262, 2016.

\bibitem{HHR17}
Michael~A. Hill, M.~J. Hopkins, and D.~C. Ravenel.
\newblock The slice spectral sequence for certain {$RO(C_{p^n})$}-graded
  suspensions of {$H\underline{\bf Z}$}.
\newblock {\em Bol. Soc. Mat. Mex. (3)}, 23(1):289--317, 2017.

\bibitem{HHR21}
Michael~A. Hill, Michael~J. Hopkins, and Douglas~C. Ravenel.
\newblock {\em Equivariant Stable Homotopy Theory and the Kervaire Invariant
  Problem}.
\newblock New Mathematical Monographs. Cambridge University Press, 2021.

\bibitem{HiYa18}
Michael~A. Hill and Carolyn Yarnall.
\newblock A new formulation of the equivariant slice filtration with
  applications to {$C_p$}-slices.
\newblock {\em Proc. Amer. Math. Soc.}, 146(8):3605--3614, 2018.

\bibitem{Kr20}
Achim Krause.
\newblock The {P}icard group in equivariant homotopy theory via stable module
  categories, 2020.

\bibitem{Sl21}
Carissa Slone.
\newblock Klein {F}our 2-slices and the {S}lices of ${\Sigma}^{\pm
  n}{H}\underline{{\mathbb{Z}}}$, 2021.

\bibitem{Ull_thesis}
John Ullman.
\newblock {\em On the {R}egular {S}lice {S}pectral {S}equence}.
\newblock ProQuest LLC, Ann Arbor, MI, 2013.
\newblock Thesis (Ph.D.)--Massachusetts Institute of Technology.

\bibitem{Ull}
John Ullman.
\newblock On the slice spectral sequence.
\newblock {\em Algebr. Geom. Topol.}, 13(3):1743--1755, 2013.

\bibitem{We00}
Peter Webb.
\newblock A guide to {M}ackey functors.
\newblock In {\em Handbook of algebra, {V}ol. 2}, volume~2 of {\em Handb.
  Algebr.}, pages 805--836. Elsevier/North-Holland, Amsterdam, 2000.

\bibitem{Ya17}
Carolyn Yarnall.
\newblock The slices of {$S^n\wedge H\underline{\mathbb Z}$} for cyclic
  {$p$}-groups.
\newblock {\em Homology Homotopy Appl.}, 19(1):1--22, 2017.

\end{thebibliography}

\end{document}